\newcommand{\lto}{\longrightarrow}
\newcommand{\wh}{\widehat}
\newcommand{\wt}{\widetilde}
\newcommand{\sm}{\smallsetminus}
\newcommand{\al}{\alpha}
\newcommand{\be}{\beta}
\newcommand{\de}{\delta}
\newcommand{\ep}{\varepsilon}
\renewcommand{\th}{\theta}
\newcommand{\la}{\lambda}
\renewcommand{\phi}{\varphi}
\newcommand{\si}{\sigma}
\newcommand{\om}{\omega}
\newcommand{\Ga}{\Gamma}
\newcommand{\La}{\Lambda}
\newcommand{\De}{\Delta}
\newcommand{\Si}{\Sigma}
\newcommand{\ZZ}{{\mathbb Z}}
\newcommand{\CC}{{\mathbb C}}
\newcommand{\ft}{\mathfrak t}
\newcommand{\R}{{\rm R}}
\newcommand{\PR}{{\rm PR}}
\newcommand{\Hom}{\operatorname{Hom}\,}
\newcommand{\Aut}{\operatorname{Aut}}
\newcommand{\Diff}{\operatorname{Diff}}
\newcommand{\Fix}{\operatorname{Fix}}
\newcommand{\Ad}{\operatorname{Ad}}
\newcommand{\Stab}{\operatorname{Stab}}
\newcommand{\lk}{\operatorname{lk}\,}
\newcommand{\diag}{\operatorname{diag}}
\newcommand{\del}{\partial}
\newtheorem{theorem}{Theorem}[section]
\newtheorem{lemma}[theorem]{Lemma}
\newtheorem{proposition}[theorem]{Proposition}
\newtheorem{corollary}[theorem]{Corollary}
\newtheorem{conjecture}[theorem]{Conjecture}
\theoremstyle{remark}
\newtheorem{remark}[theorem]{Remark}
\newtheorem{example}[theorem]{Example}
\begin{document}

\title[The ${SU(N)}$ Casson-Lin invariants for links]
{The $\boldsymbol{SU(N)}$ Casson-Lin invariants for links}
\author{Hans U. Boden}
\address{Mathematics \& Statistics, McMaster University, Hamilton, Ontario} 
\email{boden@mcmaster.ca}
\thanks{The first author was supported by a grant from the Natural Sciences and Engineering Research Council of Canada.
The second author was supported by a CIRGET postdoctoral fellowship.}

\author{Eric Harper}
\address{Mathematics \& Statistics, McMaster University, Hamilton, Ontario}
\email{eharper@math.mcmaster.ca}

\subjclass[2010]{Primary: 57M25, Secondary: 20C15}
\keywords{Braids, links, representation spaces, Casson-Lin invariant}

\date{\today}
\begin{abstract}
We introduce the $SU(N)$ Casson-Lin invariants  for links $L$ in $S^3$ with more than one component.
Writing $L = \ell_1 \cup \cdots \cup \ell_n$, we require as input an $n$-tuple
$(a_1,\ldots, a_n) \in \ZZ^n$ of \emph{labels}, where $a_j$ is associated with $\ell_j$.
 The $SU(N)$ Casson-Lin invariant, denoted $h_{N,a}(L)$, gives an algebraic count
of certain projective $SU(N)$ representations of the link group $\pi_1(S^3 \sm L)$,
and the family $h_{N,a}$ of link invariants gives  a natural extension of the $SU(2)$ Casson--Lin invariant, 
which was defined for knots by X.-S. Lin and for 2-component links by Harper and Saveliev. 
We compute the invariants for the Hopf link and more generally for chain links, and we show that, under
mild conditions on the labels $(a_1, \ldots, a_n)$, the invariants $h_{N,a}(L)$ vanish whenever $L$ is a  split link.
\end{abstract}
\maketitle


\section*{Introduction} 

The goal of this paper is to construct $SU(N)$ Casson-Lin invariants $h_{N,a}(L)$ for oriented links $L$ in $S^3$.  These 
invariants are defined as a signed count of conjugacy classes of certain  irreducible
 {projective} $SU(N)$ representations of $\pi_1 (S^3 \sm L)$ with a non-trivial $2$-cocycle.
Given an oriented link $L$ with $n$ components, the $2$-cocycle is determined by an $n$-tuple $a=(a_1, \ldots, a_n) \in \ZZ^n$ of labels, and 
the choice of labels is made so that the resulting $2$-cocycle is nontrivial. This is critical in what follows because it prohibits the
existence of reducibles, see Proposition \ref{NoReducibles}. 
We denote the resulting algebraic count as $h_{N, a}(L)$, and the following theorem is the main result of this paper.

\smallskip\noindent
{\bf Theorem.}
Suppose $L \subset S^3$ is an oriented $n$-component link  with $n\geq 2$ and $a=(a_1,\ldots, a_n)$ is an allowable $n$-tuple of labels.
Then the integer $h_{N, a}(L)$ is a well-defined invariant of $L$.

\medskip \noindent
We briefly outline how the above theorem is established. By Alexander's Theorem \cite{A}, every link $L \subset S^3$
can be realized as the closure $L=\wh{\si}$ for some braid $\si \in B_k.$ 
 The braid group $B_k$ acts naturally  on the free group $F_k$, and this
 induces an action on the space  of $SU(N)$ representations of $F_k,$ which we denote as
 $$R_k = \Hom(F_k, SU(N)) = SU(N) \times \cdots \times SU(N).$$

We extend this action to the wreath product $\ZZ_N \wr B_k$ as follows.
Identifying $\ZZ_N$ with the center of $SU(N)$, then for
$\ep = (\ep_1,\ldots, \ep_k) \in (\ZZ_N)^k$ and $X=(X_1,\ldots, X_k) \in R_k$,  
we set $(\ep,\si)(X) = (\ep_1 \si(X)_1,\ldots, \ep_k \si(X)_k).$  
This extends the braid group action on $R_k$   to an action of $\ZZ_N \wr B_k$, and 
in fact every fixed point $\Fix(\ep \si) \subseteq R_k$ can be identified with  a
 projective representation of the link group $G_L=\pi_1(S^3 \sm L).$
 
The key result is Proposition \ref{NoReducibles}, which shows that every element $X \in \Fix(\ep \si)$ is irreducible.
Consequently, writing $R^*_k \subset R_k$ for the subspace of irreducible $SU(N)$ representations, 
Proposition \ref{NoReducibles} implies that the  graph $\Ga^*_{\ep \si}$ and the diagonal $\De^*_k$ 
intersect in a compact subset of $R^*_k \times R^*_k$.
It follows that one can arrange transversality of the intersection $\Ga^*_{\ep \si} \cap \De^*_k$ 
by a compactly supported isotopy, and using natural orientations
on the quotients $\wh{\Ga}_{\ep\si} = \Ga^*_{\ep \si}/PU(N)$ and
 $ \wh{\De}_k= \De^*_k/PU(N)$, we define $h_{N,a}(\ep \si)$ as the 
 oriented intersection number of $\wh{\Ga}_{\ep\si}$ and $ \wh{\De}_k$. 
 Our main result is then established by showing that $h_{N,a}(\ep \si)$ is independent of 
 the choice of  compatible $k$-tuple $\ep=(\ep_1,\ldots, \ep_k)$  (Proposition \ref{indepofep}), 
 and that it is  invariant under the two Markov moves (Propositions \ref{markov1} and \ref{markov2}). 
It follows that $h_{N, a}(L)$ gives a well-defined invariant of the link $L \subset S^3$.
  
One of the virtues of this approach is that it leads to a direct method for computing the invariants, 
and we illustrate this by computing $h_{N,a}(L)$ for the Hopf link and for chain links (Propositions \ref{prop-hopf} and \ref{prop-chain}) and by showing that the invariants
vanish for split links (Proposition \ref{SplitLinks}).

\subsection*{Gauge Theory}
One motivation for defining link invariants in terms of the $SU(N)$ representations of the link group is that
these representations can be identified with flat connections on a principal $SU(N)$ bundle over the link exterior, which allows for a gauge theoretic interpretation.  
This approach was originally used by Taubes to interpret Casson's invariant $\la(\Si)$ of homology $3$-spheres $\Si$ in terms of flat $SU(2)$ connections \cite{Taubes}, and
using similar ideas, Floer defined $\ZZ_8$-graded groups $HF_*(\Si)$ called
the instanton Floer homology and
whose Euler characteristic equals the Casson invariant  \cite{Floer}.

The Casson-Lin invariants can also be interpreted gauge theoretically, as we now explain. 
The Casson-Lin invariant was originally defined in \cite{Lin}   by X.-S. Lin, who described an invariant $h(K)$ of 
knots $K \subset S^3$ as an algebraic count of conjugacy classes of \emph{tracefree} 
irreducible $SU(2)$ representations of $\pi_1(S^3 \sm K)$ and proved that $h(K) = {\rm sign}(K)/2$, half the signature of $K$.
In \cite{Herald}, C. Herald used  gauge theory to  define an extended Casson-Lin invariant
$h_\al(K)$ for knots $K \subset \Si^3$ in homology 3-spheres which allows for 
more general meridional trace conditions, and
he generalized Lin's formula by showing that $h_\al(K) ={\rm sign}_\al(K)/2$, half the  Tristram-Levine $\al$-twisted signature of $K$. 
(Similar results were obtained by M. Heusener and J. Kroll in \cite{HK}.)
O. Collin and B. Steer then used moduli spaces of orbifold connections to  
define an associated Floer homology theory for knots whose Euler characteristic equals $h_\al(K)$ in \cite{Collin-Steer},
and P. Kronheimer and T. Mrowka further developed the instanton Floer homology theory of  knots in \cite{KM1}, 
and they used it to prove a strong nontriviality result for Khovanov homology in \cite{KM2}. 
    
The second author and N. Saveliev used projective $SU(2)$ representations
to extend the Casson-Lin invariant to  2-component links $L$ in $S^3$ in \cite{HS1},
and they showed that $h(L) = \pm lk (\ell_1,\ell_2)$, the linking number of  $L=\ell_1 \cup \ell_2$. 
They gave a gauge theoretic description of the invariant $h(L)$ in \cite{HS2}, 
where they also described Floer homology groups with Euler characteristic equal to $h(L)$.

In view of all of these results, it is natural to ask whether the $SU(N)$ Casson-Lin invariants introduced here can also be interpreted gauge theoretically.  
We plan to address this question  in a future article using moduli spaces of projective $SU(N)$ representations (cf. \cite{RS}). 
We hope to use this  approach to extend the Casson-Lin invariants $h_{N,a}(L)$ to links $L \subset \Si^3$ in homology 3-spheres 
and to describe corresponding Floer homology groups.
In particular, we expect this approach will help clarify the relationship between the invariants $h_{N,a}(L)$ studied here 
and the $SU(N)$ instanton Floer groups constructed by Kronheimer and Mrowka in \cite{KM1}. 
It is possible that this approach will also shed light on other interesting questions, 
such as whether and how  the Casson-Lin invariants are related to classical link invariants, such as the higher linking numbers.
 
\medskip 
We give a brief outline of the contents of this paper.    
In Section \ref{sec1}, we introduce the notation for braids $\si \in B_k$, links $L \subset S^3$, and $SU(N)$ representations that is used throughout the article. In Section \ref{sec2}, we introduce allowable labels $(a_1,\ldots, a_n)$ for a given $n$-component link $L \subset S^3$, and compatible  $k$-tuple $(\ep_1,\ldots, \ep_k)$ for a braid $\si \in B_k$ with closure $L$. We also introduce projective $SU(N)$ representations of the link group $G_L$ and establish irreducibility of elements of $\Fix(\ep \si)$.
In Section \ref{sec3}, we define the invariant $h_{N,a}(L)$ as an oriented intersection number and prove it is independent of the various choices involved.
 In Section \ref{sec4}, we calculate the invariants $h_{N,a}(L)$ for the Hopf link and   the $n$-component chain link, and we prove a general vanishing result for the invariants   for split links. 

\section{Braids and representations} \label{sec1}
In this section,
we introduce the results for braids, links, and $SU(N)$ representations that will be used in throughout the article.

\subsection{The braid group}
We denote by $B_k$ the group of geometric braids on $k$-strands with
standard generators  $\si_1, \ldots ,\si_{k-1}$ and 
relations $\si_i \si_j = \si_j \si_i$ for $|j-i| > 1$ and
$\si_i \si_{i+1} \si_i = \si_{i+1} \si_i \si_{i+1}.$
The generators are depicted in Figure \ref{braid-gens} below, and note that in this paper we follow Convention 1.13 of \cite{KT}.

\bigskip
\begin{figure}[h]
\includegraphics[scale=0.8]{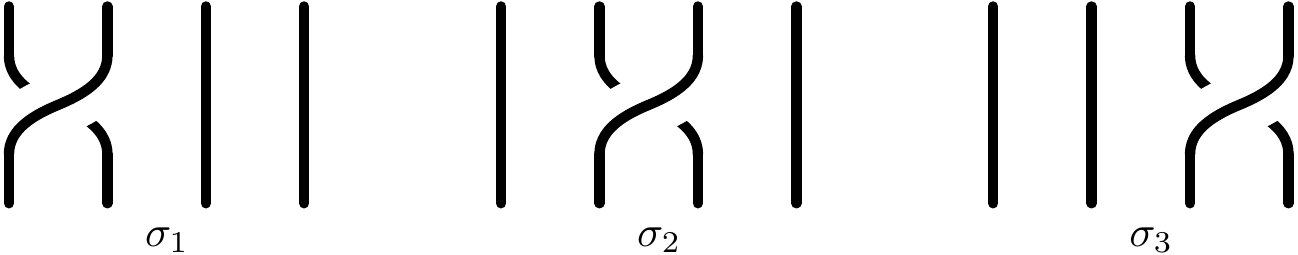} 
\caption{The three generators of  $B_4$}
\label{braid-gens}
\end{figure}

Each braid $\si \in B_k$ determines a permutation, and the resulting map $B_k \to S_k$, 
which sends the generator $\si_i$ to the transposition $\bar \si_i = (i, i+1)$, is a surjection.
Given $\si \in B_k,$ we let $\bar{\si} \in S_k$ denote the corresponding
permutation. Under this map, the symmetric group $S_k$ acts on the set $\{1,\ldots, k\}$ on the right.
For $ j \in \{1,\ldots, k\}$, we write $(i)^{\bar{\si}}$ for the image of $i$ under $\bar{\si} \in S_k.$

Let $F_k$ be the free group with free generating set  $x_1, \ldots, x_k$. 
There is a natural right action of the braid group $B_k$ on $F_k$ defined
by setting $\si_i\colon F_k \to F_k$ to be the map 
\[
\begin{array}{lll} 
 x_i & \mapsto & x_{i+1} \\
 x_{i +1} & \mapsto & (x_{i+1})^{-1}\, x_i\; x_{i+1} \\
 x_j & \mapsto & x_j, \;\; j \neq i ,i +1.
\end{array}
\]
This action defines 
a faithful representation
$\varrho\colon B_k \lto \Aut(F_k) $,
and we use it to identify $B_k$ with its image in $\Aut(F_k)$ under $\varrho.$ 
As this is a right action, we will use $x_i^{\si}$ to denote the image of $x_i$ under $\si \in B_k$.

\begin{example} \label{first-ex-page}
We explain how to read the action of a braid, which is explained in section 2.4 of \cite{FRR} for the left action, and we present the details for the right action. 

\begin{figure}[h]
\includegraphics[scale=0.85]{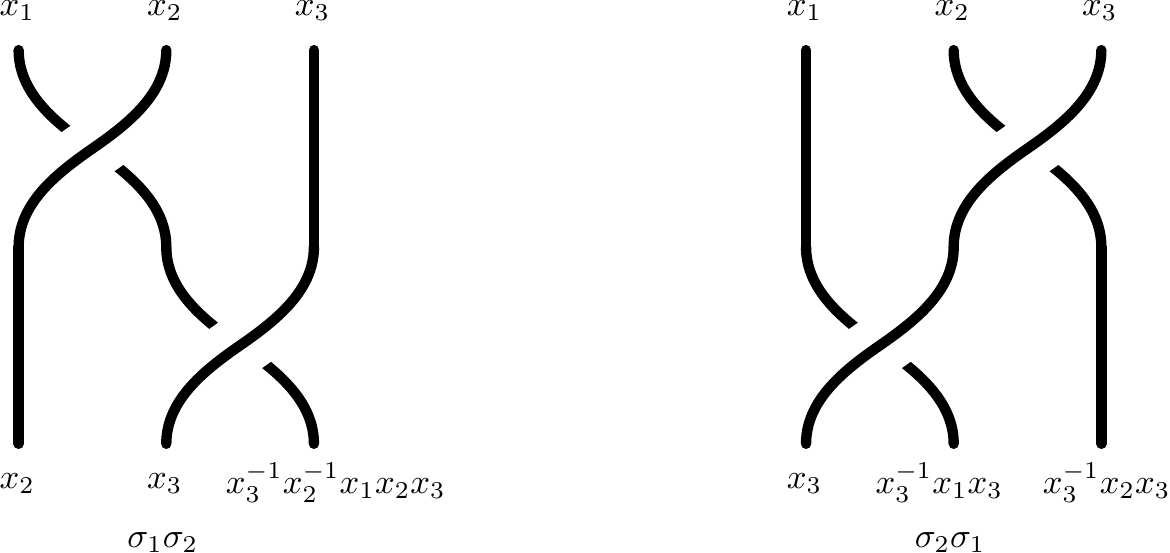} 
\caption{Reading the action of the braids $\si_1 \si_2$ and $\si_2 \si_1$}
\label{braid-read}
\end{figure}

The basic idea is to view the free group $F_k$ as the fundamental group of a 2-disk with $k$ punctures and keep track of basepoints as you move the disk vertically, letting the punctures move along the braid. 
Specifically,  label the top strands $x_1, \ldots, x_k$ from left to right. Then push the labels down, inserting a Wirtinger relation at each crossing. At the bottom of the braid the strands will be labeled by words $w_1, \ldots, w_k$ in $x_1, \ldots, x_k$, and the right action of $\si$ is given by the automorphism sending $x_i$ to $x_i^{\si}:=w_i$.

Using Figure \ref{braid-read}, we determine the actions of $\si_1 \si_2$ and $\si_2 \si_1$ on $F_3 = \langle x_1,x_2,x_3\rangle$ to be given by    
$$\begin{cases} x_1^{\si_1\si_2} = x_2, \\ x_2^{\si_1\si_2} = x_3, \\ x_3^{\si_1\si_2} = x_3^{-1}x_2^{-1}x_1 x_2 x_3,
\end{cases}  \quad \text{ and } \qquad\qquad
\begin{cases}  x_1^{\si_2\si_1} = x_3, \\ x_2^{\si_2\si_1} = x_3^{-1} x_1 x_3,\\ x_3^{\si_2\si_1} = x_3^{-1}x_2 x_3.\end{cases}$$
 \end{example}

We point out two facts about the action of $B_k$ on $F_k$, both of which are easily verified 
for each generator.
Firstly, for any $\si \in B_k$, 
$\si$ acts by conjugation and permutation on the generating set $x_1, \ldots, x_k$ for $F_k$. Indeed, 
\begin{equation}\label{BraidConj}
x_i^{\si} = w\, x_{(i)^{\bar{\si}}}\, w^{-1},
\end{equation}
where $w \in F_k$ is some word depending on $\si$ and $i$.
Secondly,  every braid $\si \in B_k$ preserves the product $x_1 \cdots x_k$, 
\begin{equation}\label{Product}
(x_1 \cdots x_k)^\si = x_1 \cdots x_k.
\end{equation}

\subsection{The group of a link}

Every link $L$ in $S^3$ can be realized as the closure $L=\wh\si$ of a braid $\si$.
We regard $L$ as an oriented link, where the strands of the braid $\si$ are oriented in the downward direction.
The link group $G_L=\pi_1 (S^3 \sm L)$ admits a standard presentation
\begin{equation}\label{G}
G_L = \pi_1 (S^3 \sm \wh\si) = \langle\,x_1,\ldots, x_k \mid x_i = x_i^\si,
\; i = 1,\ldots, k\,\rangle.
\end{equation}

The number of components of the link $L = \wh\si$ is the number 
of disjoint cycles in the permutation $\bar\si$. We will be interested in $n$-component 
links, that is, the closures of braids $\si$ with 
\begin{equation}\label{Cycles}
\bar\si = (i_1, \ldots, i_{k_1})(i_{k_1 + 1}, \ldots, i_{k_2}) \cdots (i_{k_{n-1}+1}, \ldots, i_{k_n}),
\end{equation}
where $1 \leq k_1 < k_2 < \cdots< k_n=k$.
We define multi-indices $I_1,I_2, \ldots ,I_n$ by setting 
 $I_j = \{ i_{k_{j-1}+1}, \ldots, i_{k_j} \}$ for $j=1,\ldots, n$, and  we
denote $\bar \si = (I_1)\cdots(I_n)$.  
If $L = \ell_1 \cup \cdots \cup \ell_n$ is the closure of a braid $\si$,
we will assume that the cycles in the permutation $\bar\si =(I_1) \cdots (I_n)$ are written
correspondingly, so that the component $\ell_j$ of $L$ corresponds to the braid closure of
the strands in $I_j$.

\subsection{The special unitary group}

Consider the Lie group $SU(N)$ of unitary $N \times N$ matrices with determinant one.  
Recall that $SU(N)$ has real dimension   
$N^2-1$ and has center  
isomorphic to $\ZZ_N = \{ \om^d \mid d \in \ZZ \}$, where
$\om  = e^{2 \pi i/N}$. Notice that we are viewing $\ZZ_N$ as the subgroup of $U(1)$ consisting of $N$-th
roots of unity, and for this reason we view it as a multiplicative group
and identify it with the center of $SU(N)$ via the map defined by sending $\om^d \mapsto \om^d I.$
 
Since every matrix in $SU(N)$ is diagonalizable, conjugacy classes in $SU(N)$ are completely
determined by their eigenvalues when considered with multiplicities. Given $A\in SU(N)$ we denote its conjugacy class by $C_A.$
There is a unique conjugacy class $C_A$
which is preserved under multiplication by $\om=e^{2 \pi i/N}$,
and this is the conjugacy class of the diagonal matrix $A$ whose eigenvalues
are the $N$ distinct $N$-th roots of $(-1)^{N-1}$. Setting $\xi = e^{2 \pi i/2N}$, then $A$ is
given by the diagonal matrix ${\rm diag}(1, \om, \ldots, \om^{N-1})$ when $N$ is odd
and by ${\rm diag}(\xi, \om \xi , \ldots, \om^{N-1}\xi)$ when $N$ is even.
In either case, since
the eigenvalues of $A$ are all distinct, we see that the stabilizer of $A$ is
the standard maximal torus $T^{N-1}$ in $SU(N)$ and that $C_A \cong SU(N)/T^{N-1}$ is the variety of full flags in $\CC^N$
and has 
real dimension $N^2-N$. 

\subsection{$\boldsymbol{SU(N)}$ representations}
For a discrete group $G$, let $\R (G) =\Hom(G, SU(N))$ denote the variety of $SU(N)$ representations of $G$.
For convenience, we set $\R_k = \R (F_k)= SU(N) \times \cdots \times SU(N)$ to be the variety of $SU(N)$ representations
of the free group $F_k$. 
The faithful representation $\varrho\colon B_k \to \Aut(F_k)$ induces a representation 
\begin{equation}\label{Diff1}
\wt\varrho \colon B_k \lto \Diff(\R_k)
\end{equation}
given by $\wt \varrho(\si)(\al) = \al \circ \si$.  
We will often abuse notation and simply denote $\wt\varrho(\si)$ by $\si$.

\begin{remark}
Theorem 2.1 of \cite{Long} implies that $\wt \varrho$ is defined on $\Aut(F_k)$ and is faithful.
\end{remark}

\begin{example} \label{first-ex-page}
Consider $\si_1$ and $\si_2 \in B_3$. For  $X=(X_1,X_2,X_3) \in \R_3$, we have
$$ \si_1(X)=(X_2, X_2^{-1} X_1 X_2,X_3) \qquad \text{and}\qquad 
\si_2(X) = (X_1,  X_3, X_3^{-1} X_2 X_3).$$

Using this, one can easily compute that
$$\si_1^2 (X)= (X_2^{-1} X_1 X_2, X_2^{-1} X_1^{-1} X_2 X_1 X_2, X_3)$$

and further that
\begin{eqnarray*} \si_1 \si_2 (X)&=& (X_2, X_3, X_2^{-1}X_3^{-1}X_1 X_3 X_2)  \qquad \text{and}\\
\si_2\si_1(X) &=& (X_3, X_3^{-1}X_1 X_3, X_3^{-1} X_2 X_3).
\end{eqnarray*}
\end{example}
      
Using the standard presentation \eqref{G} of the link group, 
we notice that $\R(G_L)$ can be identified with $\Fix(\si) \subset \R_k$, 
\[
\R(G_L)=\{ (X_1, \ldots, X_k) \in \R_k \mid X_i = \si (X)_i \}.
\]

A $k$-tuple $(X_1,\ldots X_k) \in \R_k$ is called \emph{reducible} if it can be simultaneously conjugated by
an element of $SU(N)$ such that each $X_i$ has the form
\begin{equation} \label{eq-red}
X_i=
\left(
\begin{array}{ccc}
  A_i & 0    \\
  0    & B_i \\  
\end{array}
\right),
\end{equation}
where $A_i$ is a block of size $N_1$ and $B_i$ is a block of size
$N_2$, and $N_1 + N_2 = N$. 
A $k$-tuple $(X_1,\ldots X_k) \in \R_k$ is \emph{irreducible} if it is not reducible.

\subsection{The wreath product $\boldsymbol{{\ZZ}_N \wr B_k}$  }

The wreath product $\ZZ_N \wr B_k$ is the semidirect product of $B_k$ with $(\ZZ_N)^k$, where $B_k$
acts on $(\ZZ_N)^k$  by permutation.
In other words, $\ZZ_N \wr B_k$ consists of pairs $(\ep, \si) \in (\ZZ_N)^k \times B_k$,
and the group structure is given by  
\[
(\ep,\si)\cdot(\ep',\si')\;=\;(\ep \bar{\si}(\ep'),\,\si\si').
\]

Here, $\bar{\si}$ acts on $\ep' = (\ep'_1, \ldots, \ep'_k)$ by permutation, i.e. $\bar{\si}(\ep') = (\ep'_{(1)^{\bar{\si}}}, \ldots, \ep'_{(k)^{\bar{\si}}})$.
In particular, it follows that $\si(\ep X) = \bar{\si}(\ep) \si(X).$

We extend the representation \eqref{Diff1} to the representation
\begin{equation}\label{Diff2}
\wt\varrho \colon \ZZ_N \wr B_k \lto \Diff(\R_k)
\end{equation} 
defined by sending the pair $(\ep, \si)$
to the diffeomorphism $\ep \si \colon \R_k \to \R_k$, where 
\[
\ep\si(X) = (\ep_1 \si(X)_1,
\ldots,\ep_k \si(X)_k).
\]
Thus $\ep$ twists the coordinates of $\si(X)$ by elements of the center $\ZZ_N$.

\begin{example} For $X=(X_1,X_2,X_3) \in \R_3$ and $\ep =(\ep_1,
\ep_2,\ep_3) \in (\ZZ_N)^3$, we have 

\begin{eqnarray*}
(\ep\si_1)(X_1,X_2,X_3) &=& (\ep_1\, X_2, \ep_2\,X_2^{-1} X_1 X_2,\ep_3\,X_3) \qquad \text{and} \\
\si_1(\ep X) = \bar{\si}_1(\ep) \si_1(X) 
&=& (\ep_2 \,X_2, \ep_1\, X_2^{-1} X_1 X_2, \ep_1 X_1 , \ep_3 X_3).
\end{eqnarray*}
\end{example}

\section{Projective representations of the link group} \label{sec2}

Our goal in this paper is to define 
invariants of $L$, and we will do so by performing a signed count of certain irreducible
projective $SU(N)$ representations.

\subsection{Projective representations} 
Suppose $\si \in B_k$ is a braid whose closure $\wh\si$ is a link $L$ in $S^3.$
For any $k$-tuple $\ep = (\ep_1,\ldots, \ep_k) \in (\ZZ_N)^k$,
an element $X=(X_1, \ldots, X_k) \in SU(N)^k$ in $\Fix(\ep\si)$
determines a $PU(N)$ representation of the link group $G_L$,
i.e. a homomorphism $\wt \al\colon G_L \to PU(N)$.
To see this, note that for any $X \in \Fix(\ep\si)$, since
$\ep_i \si(X)_i = X_i$ holds in $SU(N)$ and $\ep_i \in \ZZ_N$ is central, the
equation $\si(\wt X)_i = \wt{X}_i$ holds for the $k$-tuple $\wt{X} \in PU(N)^k$, which shows
that $\wt{X}$ determines a representation $\wt\al \colon G_L \to PU(N)$.

Given a discrete group $G$, 
we define a \emph{projective representation} of $G$
to be a \emph{function} (not a homomorphism!) 
$\al\colon G \to SU(N)$ such that $\al(gh) \al(h)^{-1} \al(g)^{-1} \in \ZZ_N$  
for all $g,h \in G.$
For any projective representation $\al\colon G \to SU(N)$, its  
composition with the surjection $\Ad\colon SU(N) \to PU(N)$ gives rise to a representation $\wt\al=\Ad \al \colon G \to PU(N)$,
and thus every projective representation $\al\colon G \to SU(N)$ is the lift  
of an honest representation $\wt\al\colon G \to PU(N)$. Alternatively, any representation
$\wt\al\colon G \to PU(N)$ can be lifted to a projective representation $\al\colon G \to SU(N)$,
though the lift is generally not unique.

Given a projective representation $\al\colon G \to SU(N)$, we can associate a map
$c\colon G \times G \to \ZZ_N$ defined by $c(g,h) =\al(gh) \al(h)^{-1} \al(g)^{-1}$.
Notice that the map $c$ satisfies the condition
that  $c(gh,k) c(g,h) = c(g,hk) c(h,k)$ for all $g,h,k \in G,$
and hence $c$ is a 2-cocyle of $G$.

For a fixed 2-cocycle $c\colon G \times G \to \ZZ_N$, 
let $\PR_c(G)$ denote the set of projective representations $\al\colon G \to SU(N)$ whose associated 2-cocycle is $c$.
If $G$ is finitely generated with generating set $\{g_1,\ldots, g_k\}$, then
any projective representation $\al \in \PR_c(G)$ is completely determined
by the 2-cocycle $c$ and the elements $\al(g_1), \ldots, \al(g_k) \in SU(N)$, and in this way one can realize $\PR_c(G)$ as a subset 
of $ SU(N)^k$. It is a compact real algebraic variety.

\subsection{Allowable labels and compatible $\boldsymbol{k}$-tuples}  
\label{subsect2-2}
Given a link $L$ in $S^3$ with $n$ components,
we can write $L=\ell_1 \cup \cdots \cup \ell_n.$
An $n$-tuple $a=(a_1,\ldots, a_n) \in \ZZ^n$ of integers is called
\emph{allowable}  if the following
three conditions are satisfied:
\begin{enumerate}
\item[(i)] $0 \leq a_i < N$ for $i = 1\ldots, n$,
\item[(ii)] $d=\gcd(a_1, \ldots, a_n)$ is relatively prime to $N$,
\item[(iii)] $a_1 + \cdots + a_n$ is a multiple of $N$. 
\end{enumerate}
An allowable $n$-tuple $(a_1, \ldots, a_n)$ is called an $n$-tuple of \emph{labels} for $L$,
and $a_j$ is the label corresponding to  the $j$-th component
$\ell_j$ of $L$.

Suppose now that $L$ is the closure of a braid $\si \in B_k$, and write
the permutation $\bar \si$ as a product $(I_1)\cdots(I_n)$ of disjoint cycles
in such a way that  $I_j$ corresponds
to the $j$-th component $\ell_j$ of $L$.

Recall that $\om = e^{2 \pi i/N}.$
A  $k$-tuple $\ep = (\ep_1,\ldots,\ep_k) \in (\ZZ_N)^k$ for $\si$ is said to be compatible
with the choice of labels $(a_1,\ldots, a_n)$ of $L$ if it satisfies the conditions
 \begin{equation}\label{Ep}
\prod_{i \in I_j} \ep_i  = \om^{a_j} 
\end{equation}
for $j=1,\ldots, n$.
This effectively labels each strand of the braid $\si$ so that, upon closure of the braid, the $j$-th component $\ell_j$ of $L$
is assigned the number $a_j$ for its label. 
Note that with this choice $\ep\si$ also preserves condition \eqref{Product} since, by Equation \eqref{Ep} and condition (iii), we have
\begin{equation}\label{EpProduct}
\begin{split}(\ep\si)(X)_1\cdots (\ep\si)(X)_k &= (\ep_1 \cdots \ep_k)  X_1\cdots X_k \\
 &= (\om^{a_1} \cdots \om^{a_n}) X_1\cdots X_k = X_1\cdots X_k.
\end{split}
\end{equation}

\subsection{An obstruction to lifting}

For $X \in \Fix(\ep \si)$ we will show that the associated representation   
$\wt\al \colon G_L \to PU(N)$ does not lift to an $SU(N)$ representation.
Essential for this conclusion is that the $k$-tuple $\ep$  is compatible
with $a$, the choice of labels for $L$.
In particular, we use
Equation \eqref{Ep} and assumption (ii) to give a nonzero obstruction to lifting $\wt \al$
to an $SU(N)$ representation.

\begin{proposition}
The representation
$\wt \al \colon G_L \to PU(N)$ does not lift to an $SU(N)$ representation.
\end{proposition}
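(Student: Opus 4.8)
The plan is to compute the cohomology class in $H^2(G_L;\ZZ_N)$ determined by the $2$-cocycle $c$ associated to a lift $\al$ of $\wt\al$, and to show this class is nonzero. Recall that $\wt\al\colon G_L \to PU(N)$ lifts to an $SU(N)$ representation if and only if the obstruction class $[c] \in H^2(G_L;\ZZ_N)$ vanishes, so it suffices to exhibit a $2$-cycle on which $c$ evaluates nontrivially. First I would use the presentation \eqref{G} of $G_L=\pi_1(S^3\sm\wh\si)$, which is a presentation with $k$ generators $x_1,\ldots,x_k$ and $k$ relations $r_i = x_i^{-1}x_i^{\si}$, so that the presentation $2$-complex has $H_2 \cong \ZZ$ generated by a single $2$-cycle coming from the relation \eqref{Product}: since $(x_1\cdots x_k)^{\si}=x_1\cdots x_k$, the product of relators $\prod r_i$ is trivial in $F_k$, and this identity among relations gives a generator $z$ of $H_2(G_L;\ZZ)$ (equivalently, $z$ is represented by a Seifert surface pushed into the exterior, or by the fundamental class argument $H_2(S^3\sm\wh\si)\cong\ZZ^{n-1}$ paired suitably — but the presentation-theoretic description is the cleanest).

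Next I would evaluate $c$ on this generator. Choosing the lift $\al$ with $\al(x_i) = X_i$, the defining relation $\ep_i\si(X)_i = X_i$ in $SU(N)$ together with $\si(X)_i = w\,X_{(i)^{\bar\si}}\,w^{-1}$ (from \eqref{BraidConj}, where $w$ is a word in the $X_j$) shows that under $\wt\al$ each relator $r_i$ maps to the identity in $PU(N)$, while under the lift it maps to the central element $\ep_i^{-1}$ — more precisely, tracking the cocycle through the identity $\prod_i r_i = 1$ and using \eqref{EpProduct}, the value of $c$ on $z$ works out to be $\prod_{i=1}^k \ep_i = \prod_{j=1}^n \prod_{i\in I_j}\ep_i = \prod_{j=1}^n \om^{a_j} = \om^{a_1+\cdots+a_n}$. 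By condition (iii) this is $1 \in \ZZ_N$, which is consistent — but this only says the total product vanishes, reflecting the single global constraint, and is \emph{not} where the obstruction lives. The correct computation is subtler: $H^2(G_L;\ZZ_N)\cong\ZZ_N^{\,n-1}$ (dually to $H_2(S^3\sm L)\cong\ZZ^{n-1}$, with a basis indexed by the components modulo the relation that all of them sum appropriately), and the obstruction class has $j$-th component equal to $\prod_{i\in I_j}\ep_i = \om^{a_j}$ in $\ZZ_N$; the obstruction is the image of $(\om^{a_1},\ldots,\om^{a_n})$ in the quotient $\ZZ_N^n/(\text{diagonal or sum-zero sublattice})\cong\ZZ_N^{\,n-1}$, which is nonzero precisely when the $a_j$ are not all equal mod $N$ in the relevant sense. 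Here condition (ii) enters: $d=\gcd(a_1,\ldots,a_n)$ is coprime to $N$, so the tuple $(a_1,\ldots,a_n)$ generates a subgroup of $\ZZ_N$ containing a unit, hence is not annihilated, and in particular the class is nonzero in $H^2(G_L;\ZZ_N)$ unless all $a_j$ are congruent — but if all $a_j\equiv a$ then $na \equiv 0$ by (iii) and $\gcd = a$ coprime to $N$ forces $n\equiv 0 \pmod N$, a degenerate case the hypotheses are arranged to exclude (or handle). So the class is nonzero, and the lift cannot exist.

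The cleanest way to organize this, and the route I would actually take in the write-up, is to avoid group cohomology bookkeeping and argue directly: suppose for contradiction that $\wt\al$ lifts to $\beta\colon G_L\to SU(N)$. Then $\beta(x_i)$ and $X_i$ both project to $\wt{X}_i\in PU(N)$, so $\beta(x_i) = \zeta_i X_i$ for some $\zeta_i\in\ZZ_N$. Applying $\beta$ to the relation $x_i = x_i^{\si}$ and using \eqref{BraidConj} and the fact that $X\in\Fix(\ep\si)$ satisfies $X_i = \ep_i\, w X_{(i)^{\bar\si}} w^{-1}$, one finds $\zeta_i = \ep_i\,\zeta_{(i)^{\bar\si}}$ for each $i$. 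Taking the product over all $i$ in a single cycle $I_j$ of $\bar\si$, the $\zeta$'s telescope (since $i\mapsto (i)^{\bar\si}$ permutes $I_j$ cyclically) and we get $1 = \prod_{i\in I_j}\ep_i = \om^{a_j}$ in $\ZZ_N$, i.e. $N \mid a_j$ for every $j=1,\ldots,n$. Hence $N$ divides $d = \gcd(a_1,\ldots,a_n)$, contradicting condition (ii) that $d$ is relatively prime to $N$ (note $d<N$ would already force $d=0$, but then $\gcd(0,N)=N\neq 1$, again a contradiction, so in fact all $a_j = 0$ is likewise excluded). This contradiction shows no lift exists. The main obstacle is getting the relation $\zeta_i = \ep_i\zeta_{(i)^{\bar\si}}$ correct — one must carefully use that $\beta$ is an honest homomorphism while $X$ only satisfies the \emph{twisted} fixed-point equation, and that the conjugating word $w$ in \eqref{BraidConj} is a product of the $x_j$'s so that the central ambiguities $\zeta_j$ appearing in $\beta(w)$ cancel between $w$ and $w^{-1}$, leaving exactly the clean multiplicative relation among the $\zeta_i$ and $\ep_i$.
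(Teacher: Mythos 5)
Your final, direct argument is essentially identical to the paper's proof: lift $\wt\al$ arbitrarily, write $\beta(x_i)=\zeta_i X_i$ with $\zeta_i\in\ZZ_N$, derive the recursion between $\zeta_i$ and $\zeta_{(i)^{\bar\si}}$ from the relations $x_i=x_i^{\si}$ together with the twisted fixed-point equation, telescope around a cycle $I_j$ to get $\om^{\pm a_j}=1$, and contradict condition (ii); the minor inversion in your recursion (the correct relation is $\zeta_{(i)^{\bar\si}}=\ep_i\zeta_i$) is immaterial since the telescoped product is $\om^{\pm a_j}$ either way. The long cohomological preamble is dispensable and in places imprecise (e.g.\ the identification of $H_2$ of the presentation complex and the structure of $H^2(G_L;\ZZ_N)$ are asserted rather than justified), but you correctly discard it in favor of the direct computation, which is the paper's route.
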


\begin{proof}
Lift $\wt \al$ arbitrarily to a map $\al \colon G_L \to SU(N)$.  
Since $\al$ is a lift of $\wt \al$, we see that $\al(x_i) = \eta_i X_i$ for some $\eta_i \in \ZZ_N$.    
Let $\eta=(\eta_1, \ldots, \eta_k) \in (\ZZ_N)^k$ be the corresponding $k$-tuple.

We assume that  
$\al$ is a representation.  This implies that $\eta X \in \Fix(\si)$.
Since $X$ is also a fixed point of $\ep\si$ we have that 
\[
\eta_i X_i =  \si(\eta_i X_i) = \eta_{(i)^{\bar \si}} \si(X)_i =
(\ep_i)^{-1} \eta_{(i)^{\bar \si}} X_i.
\]
By condition (ii), some $a_j \neq 0$, and we assume (wlog) that $a_1 \neq 0$.
Consider the component $\ell_1$ associated with the multi-index $I_1=(i_1, \ldots, i_{k_1})$,
then Equation \eqref{Ep} implies that 
\[
\eta_{i_1}= (\ep_{i_1})^{-1}\eta_{i_2}=(\ep_{i_1})^{-1} (\ep_{i_2})^{-1} \eta_{i_3} =
\cdots=(\ep_{i_1})^{-1}\cdots (\ep_{i_k})^{-1} \eta_{i_1} =  \om^{-a_1} \eta_{i_1},
\]
which is a contradiction since $\om^{-a_1} \neq 1$.
\end{proof}

\subsection{Irreducibility for elements in $\boldsymbol{\Fix(\ep\si)}$}

We will now show that for any 
allowable $n$-tuple $a=(a_1,\ldots, a_n) \in \ZZ^n$ of labels and compatible
$k$-tuple $\ep=(\ep_1,\ldots, \ep_k) \in (\ZZ_N)^k$,
every $X \in \Fix(\ep \si)$ is irreducible.
The key to the proof is condition (ii) on the labels. 

\begin{proposition}\label{NoReducibles}
If $X \in \Fix(\ep\si)$, then $X$ is  irreducible.
\end{proposition}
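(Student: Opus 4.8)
The plan is to argue by contradiction: suppose $X = (X_1,\ldots,X_k) \in \Fix(\ep\si)$ is reducible, so after conjugation every $X_i$ has block form $\diag(A_i, B_i)$ with blocks of sizes $N_1, N_2$ summing to $N$. The first step is to understand how the braid action interacts with the block decomposition. Since each $x_i^\si$ is, by \eqref{BraidConj}, a conjugate of some $x_{(i)^{\bar\si}}$ by a word $w$ in the generators, and since the $X_i$ all share the block structure, the word $w$ evaluates to a block-diagonal matrix $\diag(W_1, W_2)$; hence $\si(X)_i = \diag(W_1 A_{(i)^{\bar\si}} W_1^{-1},\, W_2 B_{(i)^{\bar\si}} W_2^{-1})$ is again block diagonal. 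The fixed-point equation $\ep_i \si(X)_i = X_i$ then splits into two equations, one for each block. In particular, since $\ep_i$ is a scalar matrix, we get $A_i = \ep_i W_1 A_{(i)^{\bar\si}} W_1^{-1}$ and $B_i = \ep_i W_2 B_{(i)^{\bar\si}} W_2^{-1}$.

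The key quantitative step is to extract a determinant obstruction from these block equations. Each $X_i \in SU(N)$, so $\det X_i = 1$, and writing $\det X_i = \det A_i \cdot \det B_i$, set $\alpha_i = \det A_i \in U(1)$ and $\beta_i = \det B_i \in U(1)$ with $\alpha_i \beta_i = 1$. Taking determinants in the block fixed-point equations gives $\alpha_i = \ep_i^{N_1}\, \alpha_{(i)^{\bar\si}}$ and similarly $\beta_i = \ep_i^{N_2}\, \beta_{(i)^{\bar\si}}$ (the conjugating factors cancel). Now fix a component $\ell_j$ with cycle $I_j = (i_1, \ldots, i_{k_j})$ in $\bar\si$. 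Walking around the cycle and composing these relations, the $\alpha$'s return to themselves after one full loop, yielding
\[
\alpha_{i_1} = \Bigl(\prod_{i \in I_j} \ep_i\Bigr)^{N_1} \alpha_{i_1} = \om^{a_j N_1}\, \alpha_{i_1}
\]
by the compatibility condition \eqref{Ep}. Since $\alpha_{i_1} \neq 0$, this forces $\om^{a_j N_1} = 1$, i.e. $N \mid a_j N_1$, for every $j = 1, \ldots, n$.

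It remains to derive a contradiction from $N \mid a_j N_1$ for all $j$. Since $N$ divides each $a_j N_1$, it divides $\gcd_j(a_j N_1) = N_1 \gcd(a_1,\ldots,a_n) = N_1 d$, where $d = \gcd(a_1,\ldots,a_n)$. By condition (ii), $d$ is relatively prime to $N$, so $N \mid N_1 d$ implies $N \mid N_1$. But $0 < N_1 < N$ since the decomposition \eqref{eq-red} is nontrivial, which is the desired contradiction. Hence no such reducible $X$ exists, and every $X \in \Fix(\ep\si)$ is irreducible.

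I expect the main obstacle to be the bookkeeping in the first step — verifying carefully that the word $w$ in \eqref{BraidConj}, when evaluated on a reducible tuple, is genuinely block diagonal, and that the conjugating blocks $W_1, W_2$ truly drop out upon taking determinants around a cycle. One should be slightly careful that $w$ depends on $\si$ and $i$ but that, for each index in a given cycle, the evaluated word respects the common block structure of all the $X_i$; this follows because the block-diagonal matrices of a fixed shape form a subgroup of $SU(N)$, closed under the products and inverses appearing in $w$. Once that is in place, the determinant computation and the number-theoretic conclusion via condition (ii) are routine.
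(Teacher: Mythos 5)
Your proof is correct and follows essentially the same route as the paper's: reduce to block-diagonal form, take determinants of the block equation around each cycle to obtain $\om^{a_j N_1}=1$ for all $j$, and contradict condition (ii). The only difference is cosmetic — you take determinants step by step around the cycle and spell out the final gcd argument ($N \mid N_1 d$ with $\gcd(d,N)=1$ forces $N\mid N_1$), whereas the paper first chains the conjugations into a single equation $A_{i_1}=\om^{a_j}PA_{i_1}P^{-1}$ before taking one determinant.
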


\begin{proof}
Suppose to the contrary that $X \in \Fix(\ep \si)$ is reducible, which means that up to conjugation,
we can assume 
$$X_i=
\left(
\begin{array}{ccc}
  A_i & 0    \\
  0    & B_i \\  
\end{array}
\right),$$
where $A_i$ has size $N_1$ and $B_i$ has size $N_2.$

The first step is to consider the component $\ell_1$ of $L$.
It is obtained by closing the strands of $\si$ associated with the cycle $I_1=(i_1, \ldots, i_{k_1})$ of $\bar\si.$ 
By \eqref{BraidConj}, 
there are words $W_1, \ldots, W_{k_1}$ in $X_1, \ldots, X_k$ such that
\[
\begin{split}
X_{i_1}&=\ep_{i_1} W_1 X_{i_2}  W_1^{-1} \\
&=(\ep_{i_1}\ep_{i_2}) W_1 W_2 \, X_{i_3} \, W_2^{-1} W_1^{-1}= \cdots  \\
&= (\ep_{i_1} \cdots \ep_{i_{k_1}}) W_1 \cdots W_{k_1} \, X_{i_1} \, W_{k_1}^{-1} \cdots W_1^{-1} \\
&= \om^{a_1} W X_{i_1} W^{-1}, 
\end{split}
\]  
where the last step follows by setting
$W =W_1 \cdots W_{k_1}$ 
and applying Equation \eqref{Ep}. 

Since $W$ is a word in the $X_i$, and each $X_i$ is block diagonal, it follows
that $W$ is also block diagonal so we can write 
$$W=
\left(
\begin{array}{ccc}
  P & 0    \\
  0    & Q \\  
\end{array}
\right).$$
Applying this to the equation above, we see that the following relationship must hold for the blocks, so
 \begin{equation}\label{Blocks}
A_{i_1}=\om^{a_1} P A_{i_1} P^{-1},
\end{equation}
and taking the determinant of  
both sides of \eqref{Blocks},  we see that 
\[
\det(A_{i_1}) = \om^{a_1 N_1} \det(A_{i_1}).
\]
Because $\det(A_{i_1}) \neq 0$, this implies $\om^{a_1N_1}=1$. 

Now repeat the argument for the other components of the link $L$. 
For the component $\ell_j$,
which is the one obtained by closing the strands of $\si$ associated with the cycle $I_j$,
Equation \eqref{Ep} implies that $\om^{a_j N_1}=1$, and we see this holds for each $j=1,\ldots, n.$
However, since $\om=e^{2 \pi i/N}$ is a primitive $N$-th root of unity,
this can only happen if $N$ divides $a_j N_1$ for each $j=1,\ldots, n.$
This  contradicts condition (ii) on the labels,
 and we conclude that each $X \in \Fix(\ep \si)$ is in fact irreducible. 
 \end{proof}

\begin{remark} We would like to thank the referee for the following observation. Suppose $L = \ell_1 \cup \cdots \cup \ell_n$ is a link and let $\La_i \cong \langle \mu_i\rangle \times \langle \la_i\rangle  \cong \ZZ \times \ZZ$ denote the $i$-th peripheral subgroup of $G_L$, where $\mu_i, \la_i$ denote the meridian and longitude of $\ell_i$. Given a representation $\wt\al \colon G_L \to PU(N)$, let $\om(\wt\al) \in H^2(G_L,\ZZ_N)$ denote the obstruction cocycle,  
which is related to the commutator pairing of the restriction $\wt \al|_{\La_i}$ as follows. If $\al \colon G_L \to SU(N)$ is
a set-theoretic lift of $\wt\al$, then the \emph{commutator pairing} of $\La_i$ is the map $c_i \colon \La_i \times \La_i \to \ZZ_N$ given by $c_i(x,y) =[\al(x),\al(y)]$. Since $\La_i$ is free abelian of rank two, 
$$ \th \colon H^2 (\La_i,\ZZ_N) \stackrel{\cong}{\lto} \Hom(\La_i \wedge \La_i, \ZZ_N) \cong \ZZ_N,$$
and  one can show that $\th(\om(\wt\al|_{\partial_i}))=c_i.$

In the previous proof, the element $W=W_1\ldots W_{k_1}$ is the image of the longitude $\la_1$ of $\ell_1$, thus our computation that   $[X_{i_1},W] = \om ^{a_1}$  determined the commutator pairing $c_1=\th(\om(\wt\al_{\partial_1}))$ by showing that $c_1(\mu_1,\la_1) = \om^{a_1}$. The labels $a_1,\ldots, a_n$ thus determine the commutator pairings associated to the peripheral subgroups $\La_1,\ldots, \La_n$. 
\end{remark}

\section{The link invariants} \label{sec3}
Throughout this section, we assume that $\si$ is a braid with closure $\wh \si =L$
a link with $n$ components $L=\ell_1 \cup \cdots \cup \ell_n$, and that $a=(a_1,\ldots, a_n)$ is an $n$-tuple
of allowable labels, with $a_j$ the label for the component $\ell_j$.

In this section we define $h_{N, a}(\ep \si)$ for compatible $k$-tuples 
$\ep$, and we show that it gives rise to an invariant of $n$-component links in $S^3$.

We define $h_{N, a}(\ep \si)$ as an algebraic count of certain projective $SU(N)$ representations
in $\Fix(\ep \si)$, namely those that satisfy the monodromy condition $X_i \in C_A$.
In other words, we require each $X_i$ to be in the conjugacy class of matrices with characteristic
polynomial $p_A(t) = t^N + (-1)^{N}.$

We will first show  that $h_{N, a}(\ep \si)$ is independent of choice of $\ep$,
and then we prove that $h_{N, a}(\ep \si)$ gives rise to a well-defined invariant of the underlying link $L$ 
 by showing that it is invariant under the Markov moves.
 
\subsection{The definition of $\boldsymbol{h_{N,a}(\ep \si)}$}
Recall that $A$ is the diagonal matrix consisting of
the $N$-th roots of $(-1)^{N-1},$
i.e.

$$A=\begin{cases} {\rm  diag}(1,\om, \ldots, \om^{N-1}) &\text{if $N$ is odd} \\
 {\rm  diag}(\xi,\om \xi, \ldots, \om^{N-1}\xi) &\text{if $N$ is even.}
 \end{cases}$$

We impose the following monodromy condition and restrict to $k$-tuples
lying in the subset $Q_k \subset \R_k$ given by
\[
Q_k = \{(X_1,\ldots,X_k) \in \R_k \mid  X_i \in C_A\,\}.
\]
Since $Q_k=(C_A)^k$ is a just a $k$-fold product of $C_A,$ 
we see that $Q_k$ is a manifold of dimension $k(N^2-N)$.

Let  $\De_k = \{ (X, X) \} \subset Q_k \times Q_k$ be the diagonal and 
$\Ga_{\ep \si} = \{ (X, \ep\si(X)) \} \subset Q_k \times Q_k$ be the graph
of $\ep \si$.  Notice that we can identify points in
the intersection $\De_k \cap \Ga_{\ep \si}$ with elements in 
$Q_k \cap \Fix(\ep \si)$.

For certain choices of labels, it will follow that $\Fix(\ep \si) \subset Q_k$,
i.e. that these monodromy conditions are automatically satisfied.
This will occur whenever the labels have the property  
that each $a_i$ is relatively prime to $N$. 
For a simple example, suppose $N$ is prime, $n$ is a positive multiple of $N$,
and $d$ is any positive integer less than $N$. 
Then one can easily verify that $a =(d,d,\ldots, d)$ is an allowable $n$-tuple of labels,
and the next result implies that $\Fix(\ep \si) \subset Q_k$ for any $k$-tuple $\ep=(\ep_1,\ldots, \ep_k)$ compatible with these labels. 

\begin{proposition}\label{A}  
Suppose $(a_1, \ldots, a_n)$ is an allowable $n$-tuple of labels such that 
each $a_j$ is relatively prime to $N$, and suppose $\ep=(\ep_1,\ldots, \ep_k) \in (\ZZ_N)^k$ is compatible with $a$.
Then $\Fix(\ep\si) \subset Q_k,$ i.e. if
$X \in \Fix(\ep\si)$, then each $X_j$ is conjugate to $A$. 
\end{proposition}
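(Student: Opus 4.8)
The plan is to reuse the iteration from the proof of Proposition \ref{NoReducibles}, but this time to extract spectral information about the $X_i$ rather than to derive a contradiction from an assumed reduction. Fix $X \in \Fix(\ep\si)$ and a component $\ell_j$ of $L$, obtained by closing the strands of $\si$ indexed by a cycle $I_j = (i_1,\ldots,i_r)$ of $\bar\si$. Applying \eqref{BraidConj} and iterating around the cycle exactly as in Proposition \ref{NoReducibles} produces words $W_1,\ldots,W_r$ in $X_1,\ldots,X_k$ with
\[
X_{i_1} = (\ep_{i_1}\cdots\ep_{i_r})\, W X_{i_1} W^{-1} = \om^{a_j}\, W X_{i_1} W^{-1},
\]
where $W = W_1\cdots W_r \in SU(N)$ and the second equality uses \eqref{Ep}. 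Rewriting this as $W^{-1} X_{i_1} W = \om^{a_j} X_{i_1}$ shows that $X_{i_1}$ is conjugate in $SU(N)$ to $\om^{a_j} X_{i_1}$, so the multiset $S$ of eigenvalues of $X_{i_1}$ satisfies $\om^{a_j} S = S$. Since the cycle $I_j$ may be based at any of its entries, the same holds with $i_1$ replaced by any $i \in I_j$; and because $I_1,\ldots,I_n$ partition $\{1,\ldots,k\}$, we conclude that for every strand $i$ the eigenvalue multiset of $X_i$ is invariant under multiplication by $\om^{a_{j(i)}}$, where $j(i)$ is the component containing $i$.

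Next I bring in the coprimality hypothesis. Since $\gcd(a_j,N)=1$, the element $\om^{a_j}$ is a primitive $N$-th root of unity, so iterating $\om^{a_j} S = S$ shows that $S$ is invariant under the entire group $\ZZ_N = \langle\om\rangle$. As $\ZZ_N$ acts freely on the unit circle, every orbit has exactly $N$ elements, and since $|S| = N$ counted with multiplicity, the multiset $S$ must be a single orbit $\{z,\om z,\ldots,\om^{N-1}z\}$ with each eigenvalue occurring once. The condition $\det X_i = 1$ then reads $z^N\om^{N(N-1)/2} = (-1)^{N-1}z^N = 1$, so $z^N = (-1)^{N-1}$, and hence the eigenvalues of $X_i$ are precisely the $N$ distinct $N$-th roots of $(-1)^{N-1}$, i.e.\ the eigenvalues of $A$. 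Being unitary, $X_i$ is diagonalizable, hence conjugate to $A$; as this holds for every $i$, we get $X \in Q_k$, which is the assertion.

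The iteration and the determinant computation are routine --- the former is literally the calculation already carried out in Proposition \ref{NoReducibles}. The one step that does the real work is the passage from ``$S$ is invariant under $\om^{a_j}$'' to ``$S$ is the full set of $N$-th roots of $(-1)^{N-1}$'': this is precisely where $\gcd(a_j,N)=1$ enters, to promote $\om^{a_j}$-invariance to $\ZZ_N$-invariance, after which freeness of the $\ZZ_N$-action on the circle, the count $|S| = N$, and the normalization $\det X_i = 1$ determine $S$ uniquely. Without coprimality the orbit of $\om^{a_j}$ is a proper subgroup of $\ZZ_N$, the multiset $S$ may be a union of several of its orbits, and $X_i$ need not be conjugate to $A$ --- which explains why the proposition carries this extra hypothesis on the labels.
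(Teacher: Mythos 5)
Your proof is correct and follows essentially the same route as the paper: iterate the relation around the cycle to get $X_i = \om^{a_j} W X_i W^{-1}$, deduce that the eigenvalue multiset is invariant under multiplication by $\om^{a_j}$, use coprimality to promote this to $\ZZ_N$-invariance, and conclude from $|S|=N$ and $\det X_i = 1$ that the spectrum is exactly that of $A$. Your write-up is in fact slightly more careful than the paper's at the step where the multiset is identified as a single free $\ZZ_N$-orbit, but the argument is the same.
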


\begin{proof}
The condition on $a_j$ ensures that $\om^{a_j}$ generates $\ZZ_N$ for each $j=1,\ldots, n,$
where $\om=e^{2 \pi i/N}$.
Write the induced permutation $\bar\si = (I_1)\cdots (I_n)$ as a product of disjoint cycles,
where $I_j$ corresponds to the $j$-th component of $L =\wh\si$.
Then for any $i \in I_j,$ we can 
apply the same argument as used to prove Proposition \ref{NoReducibles}
to see that 
$$X_i=\om^{a_j} W X_i W^{-1}.$$  
Thus, the set of eigenvalues of $X_i$
is invariant under multiplication by $\om^{a_j},$ 
and this shows the eigenvalues of $X$ 
are given by the set
$$\{ \xi, \om^{a_j} \xi, \ldots, \om^{(N-1)a_j}\xi\} = \{ \xi, \om \xi, \ldots, \om^{N-1} \xi\}$$
for some $\xi$ satisfying $\xi^N = (-1)^{N-1}$.
When $N$ is odd, one can take $\xi = 1, $ and when $N$ is even,
one can take $\xi = e^{2 \pi i/2N},$ and this shows that $X_i$ is conjugate to $A$.

Alternative argument: consider the characteristic polynomial of both sides of the above equation we see that
$p_{X_i}(t) = p_{\om^{a_j} X_i}(t) = p_{X_i}(\om^{-a_j}t)$. Since $\om^{-a_j}$ has order $N$,
 $p_{X_i}(t)$ must be a polynomial in $t^N$, and indeed the only possibility
 is that $ p_{X_i}(t)= t^N + (-1)^N.$ 
\end{proof}

We define \[
H_k = \{ (X,Y) \in Q_k \times Q_k \mid X_1 \cdots X_k = Y_1 \cdots Y_k \},
\]
and we note that $H_k$ is  not a manifold because of the presence of reducibles.
Recall that $(X,Y) \in Q_k \times Q_k$ is called {\it reducible} if all $X_i$ and $Y_i$ can be 
simultaneously conjugated into block diagonal form as in \eqref{eq-red}.
We note that the subset $S_k \subset Q_k \times Q_k$ of reducibles is closed,
and that $(Q_k \times Q_k)^* = (Q_k \times Q_k) \sm S_k$ is an open manifold of dimension $2k(N^2-N).$

\begin{proposition}
The subset $H^*_k = H_k \sm S_k$ of irreducible representations 
is an open manifold of dimension $2k(N^2-N) - (N^2 -1)$.
\end{proposition}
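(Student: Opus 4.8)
The plan is to exhibit $H^*_k$ as the zero set of a submersion on the open manifold $(Q_k \times Q_k)^*$, so that it becomes a submanifold of the expected codimension $N^2-1$, which is the dimension of $SU(N)$. First I would define the map
\[
\Phi \colon (Q_k \times Q_k)^* \lto SU(N), \qquad \Phi(X,Y) = (X_1 \cdots X_k)(Y_1 \cdots Y_k)^{-1},
\]
which is smooth since multiplication and inversion in $SU(N)$ are smooth, and note that $\Phi$ takes values in $SU(N)$ because each $X_i, Y_i \in C_A \subset SU(N)$. By construction $H^*_k = \Phi^{-1}(I) \cap (Q_k \times Q_k)^*$, so it suffices to show that $I$ is a regular value of $\Phi$ restricted to the open set of irreducibles; then $H^*_k$ is a manifold of dimension $2k(N^2-N) - \dim SU(N) = 2k(N^2-N) - (N^2-1)$, and it is open in $H_k$ since $S_k$ is closed.

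The main step is the submersion claim. Fix $(X,Y) \in H^*_k$ and set $Z = X_1 \cdots X_k = Y_1 \cdots Y_k$. I would compute the differential of $\Phi$ at $(X,Y)$ by varying a single coordinate, say $X_j$, along a path $X_j(t)$ in $C_A$ with $X_j(0) = X_j$ and $\dot X_j(0) = \zeta$, where $\zeta$ ranges over the tangent space $T_{X_j}C_A$. Differentiating $X_1 \cdots X_j(t) \cdots X_k \cdot Z^{-1}$ and translating back to the Lie algebra $\su(N)$ via right translation by $\Phi(X,Y)^{-1} = I$, one finds that the image of this partial differential is
\[
\{ (X_1 \cdots X_{j-1})\, \xi\, (X_1 \cdots X_{j-1})^{-1} \mid \xi \in X_j^{-1}\, T_{X_j}C_A \cdot X_j \, \}
= \Ad_{g_j}\!\left( T_I C_A \right),
\]
where $g_j = X_1 \cdots X_{j-1}$ and $T_I C_A \subset \su(N)$ is the tangent space to $C_A$ at the identity conjugacy-class-wise, i.e.\ the orthogonal complement $(\ft)^\perp$ of the Cartan subalgebra $\ft$ (since $C_A \cong SU(N)/T^{N-1}$). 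Thus the image of $d\Phi_{(X,Y)}$ contains $\sum_j \Ad_{g_j}((\ft)^\perp)$ (using the $X$-coordinates alone). To prove surjectivity onto $\su(N)$ it is enough to show this sum is all of $\su(N)$; equivalently, that no nonzero $\eta \in \su(N)$ is orthogonal to every $\Ad_{g_j}((\ft)^\perp)$, i.e.\ that there is no $\eta \ne 0$ with $\Ad_{g_j^{-1}}(\eta) \in \ft$ for all $j = 1, \ldots, k$. Such an $\eta$ would be simultaneously diagonalized (in the same basis as $A$) after conjugation by each $g_j^{-1}$, which forces each $X_j = g_j^{-1} g_{j+1}$ to normalize the relevant torus and, after a short argument, forces $(X,Y)$ to be reducible — contradicting $(X,Y) \in H^*_k$. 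Here I would use that $\eta \in \su(N)$ nonzero has at least two distinct eigenvalues, so its centralizer and the associated eigenspace decomposition give a nontrivial block decomposition of $\CC^N$ preserved by all the $X_i$ (and, by the relation $X_1 \cdots X_k = Y_1 \cdots Y_k$ together with an identical argument on the $Y$-coordinates, also by all the $Y_i$), which is exactly reducibility.

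The hardest part will be the final implication: deducing reducibility of $(X,Y)$ from the existence of a nonzero $\eta$ with $\Ad_{g_j^{-1}}(\eta) \in \ft$ for all $j$. The subtlety is that $\eta$ lying in $\ft$ after conjugation by $g_j^{-1}$ does not immediately say $X_j$ commutes with anything — one must track the eigenspace decomposition of $\eta$ and check it is preserved as $j$ increases from $1$ to $k$ and then wraps around (using $g_{k+1} = g_1 \cdot Z$ and that $Z$, being a product of elements of $C_A$, interacts with $\ft$ in a controlled way — in fact one can instead avoid the wrap-around by working with the subspace decomposition directly). I would organize this by letting $V_\lambda \subset \CC^N$ be an eigenspace of $\eta$ with $0 < \dim V_\lambda < N$, observing that $g_j V_{\lambda'}$ (for the eigenspaces $V_{\lambda'}$ of the fixed diagonal model of $\eta$) are the eigenspaces of $\Ad_{g_j}$-conjugates, and then showing the flag of subspaces $g_1 V_{\lambda'} , g_2 V_{\lambda'}, \ldots$ must all coincide, so that $W := g_1 V_{\lambda'}$ is invariant under every $X_i$; a parallel argument with the $Y$'s (or simply re-running the partial-differential computation in the $Y$-coordinates) shows $W$ is $Y_i$-invariant as well, and choosing a unitary basis adapted to $W$ puts all $X_i, Y_i$ in block form \eqref{eq-red}, contradicting irreducibility. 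Once this is in place, regularity of the value $I$ follows and the dimension count is immediate.
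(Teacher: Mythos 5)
Your overall strategy is the same as the paper's: realize $H_k^*$ as the preimage of $I$ under the product map on the irreducible locus and show that $I$ is a regular value by proving the differential is surjective at irreducible tuples. The gap is in your identification of the image of the $j$-th partial differential. The tangent space $T_{X_j}C_A$, right-translated to the identity, is the orthogonal complement of the \emph{centralizer} of $X_j$ --- that is, of $\ft_j = \Ad_{P_j}\ft$ where $X_j = P_j A P_j^{-1}$ --- and not of the fixed standard Cartan $\ft$. So the image of the $j$-th partial is $\Ad_{g_j}(\ft_j^\perp)$, not $\Ad_{g_j}(\ft^\perp)$; these agree only when $X_j$ happens to be diagonal. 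Your claim that the image of $d\Phi_{(X,Y)}$ contains $\sum_j\Ad_{g_j}(\ft^\perp)$ is therefore false in general, and so is the intermediate assertion that this sum is all of $\su(N)$ for irreducible tuples: if, say, $X_1=A$, then $g_1=I$ and $g_2=X_1$ are both diagonal, so $\Ad_{g_1}(\ft^\perp)+\Ad_{g_2}(\ft^\perp)=\ft^\perp\subsetneq\su(N)$ regardless of the remaining coordinates. Correspondingly, your cokernel condition ``$\Ad_{g_j^{-1}}(\eta)\in\ft$ for all $j$'' is the wrong condition and genuinely does not imply reducibility (any nonzero diagonal $\eta$ satisfies it in the example just given), which is why the step you single out as hardest --- showing the subspaces $g_jV_{\lambda'}$ all coincide --- cannot be completed as stated: conjugation by $X_j$ may permute which coordinate axes carry which eigenvalue of $\Ad_{g_j^{-1}}(\eta)$.

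Once the tangent space is identified correctly, the difficulty you flag evaporates. The correct cokernel condition is that $\Ad_{g_j^{-1}}(\eta)$ lies in $\ft_j$, i.e.\ commutes with $X_j$, for every $j$. Taking $j=1$ (where $g_1=I$) gives $[\eta,X_1]=0$, hence $\Ad_{g_2^{-1}}(\eta)=\Ad_{X_1^{-1}}(\eta)=\eta$, and by induction $\eta$ commutes with every $X_i$; the same computation in the $Y$-coordinates shows $\eta$ commutes with every $Y_i$. A nonzero $\eta\in\su(N)$ then has a nontrivial eigenspace decomposition of $\CC^N$ preserved by all $X_i$ and $Y_i$, contradicting irreducibility of $(X,Y)$. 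This is precisely the content of the paper's telescoping identity $\ft_1\cap\cdots\cap\ft_\ell=\ft_1\cap\Ad_{X_1}(\ft_2)\cap\cdots$, which identifies the orthogonal complement of the image of the differential with the common centralizer of the tuple. With that one correction your argument goes through and coincides with the paper's proof.
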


\begin{proof}
Clearly $H_k^* = f^{-1}(I)$, where $f\colon(Q_k \times Q_k)^* \to SU(N)$ is the map defined by $f(X,Y) = X_1 \cdots X_k Y_k^{-1} \cdots Y_1^{-1}.$
We will show that $I$ is a regular value of $f$, i.e.\ that $df_{(X,Y)}$ is surjective for all $(X,Y) \in f^{-1}(I)$. 
It is enough to prove this statement for  the map
$f\colon Q_\ell^* \to SU(N)$ given by $f(X_1,\ldots, X_\ell) = X_1\cdots X_\ell.$
 
Clearly the matrix $A$, since it is diagonal,
lies on the standard maximal torus $T^{N-1} \subset SU(N)$ with Lie algebra  
$$\ft= \left\{ \begin{pmatrix} ia_1 && 0\\ & \ddots \\ 0&& ia_N \end{pmatrix} \mid a_1 + \cdots +a_N =0\right\}.$$
Since $A$ has the standard maximal torus as its stablizer group, we can identify the tangent space $T_A(C_A)$ with the orthogonal
complement $\ft^\perp$ in $su(N)$, which is the subspace
 $$\ft^\perp= \left\{ \begin{pmatrix} 0 & z_{12} & \ldots &z_{1N} \\ -\bar{z}_{12} &0 & \ddots & \vdots\\ \vdots& \ddots& \ddots & z_{N-1, N} \\
 -\bar{z}_{1N} & \ldots &-\bar{z}_{N-1, N} & 0 \end{pmatrix} \mid  z_{ij} \in \CC \right\}.$$
 
 There is a similar decomposition of $su(N)$ at each $X_i$ in the $\ell$-tuple $(X_1, \ldots, X_\ell)$.
 Because each $X_i$  has $N$ distinct eigenvalues, it 
lies on a unique maximal torus $T_i \cong T^{N-1}$ in $SU(N)$.
We let $\ft_{i} \subset su(N)$ denote the corresponding Lie subalgebra, which is
the Lie algebra of the stabilizer group of $X_i$.
Using the decomposition $su(N) = \ft_i \oplus \ft_i^\perp,$
we can identify the tangent space 
$T_{X_i} (C_A)$ with $ 
\ft_i^\perp X_i$, the  right translation  of the subspace $\ft_i^\perp \subset su(N)$ by $X_i$. 
It is helpful to note that, in terms of the specific
subspaces identified above, we have $\ft_i = \Ad_{P_i} \ft$ and $\ft_i^\perp = \Ad_{P_i} \ft^\perp$ for any matrix $P_i\in SU(N)$ such that $X_i = P_i A P_i^{-1}.$

Using the fact that $\ft_i$ is the Lie algebra of the stabilizer
subgroup of $X_i$, one can see that irreducibility of the $\ell$-tuple $(X_1, \ldots, X_\ell)$ is equivalent to 
the condition that $\ft_1 \cap \cdots \cap \ft_\ell = \{0\}$.
 
For $u_i \in \ft^\perp_i$, we set $x_i=u_i X_i \in \ft_i^\perp X_i = T_{X_i} (C_A).$  
Differentiating and using the fact that $X_1 \cdots X_\ell =I$, we obtain 
\begin{align*}
 \frac{d}{dt} & (X_1+t x_1)(X_2+tx_2) \cdots (X_\ell + t x_\ell) \vert_{t=0} \\
&=x_1 X_2 \cdots X_\ell + X_1 x_2 X_2 \cdots X_\ell + \cdots +X_1 \cdots X_{\ell-1} x_\ell \\
&= u_1 + X_2 u_2 X_2^{-1} + \cdots + (X_2 \cdots X_\ell) u_\ell (X_\ell^{-1} \cdots X_2^{-1}).
\end{align*}
In order to show that the map $df_X$ is onto, we claim that, given any $v \in su(N),$  we can 
find 
$u_i \in \ft_i^\perp$ for $i=1,\ldots, \ell$
such that
\begin{equation} \label{eq-tan}
v =  u_1 + X_2 u_2 X_2^{-1}  + \cdots +(X_2 \cdots X_\ell) u_\ell (X_\ell^{-1} \cdots X_2^{-1}).
\end{equation}

 Notice that we can solve \eqref{eq-tan} for any 
$$v \in \ \ft_1^\perp  \cap  \left(X_1\,  \ft_2^\perp  \, X_1^{-1}\right) \cap \cdots \cap \left(X_1 \cdots X_{\ell-1}\right) \ft_\ell^\perp \left(X_{\ell-1}^{-1} \cdots X_1^{-1}\right).$$
Notice further that since $\ft_i$ is the Lie algebra of the maximal torus containing 
$X_i$, we have $\ft_i \cap \ft_{i+1}= t_i \cap \left(X_i \, \ft_{i+1}\, X_i^{-1}\right)$. More generally, for any subspace $V \subset su(N),$
we have $\ft_i \cap V= t_i \cap \left( X_i \, V\, X_i^{-1}\right)$.
Repeated application gives that
\begin{eqnarray*}
\ft_1 \cap \cdots \cap \ft_\ell&=& \ft_1 \cap \cdots \cap \left(X_{\ell-1} \, \ft_\ell
\, X_{\ell-1}^{-1}\right) \\
&=& \ft_1 \cap \cdots \cap \left(X_{\ell-2} \, \ft_{\ell-1}
\, X_{\ell-2}^{-1}\right) \cap  \left(X_{\ell-2} X_{\ell-1}\,  \ft_\ell
 \, X_{\ell-1}^{-1} X_{\ell-2}^{-1}\right) \\
&\vdots& \\
&=&  \ft_1 \cap \left( X_1 \ft_2 X_1^{-1}\right) \cap \cdots \cap \left(X_1 \cdots X_{\ell-1} \right) \ft_\ell \left( X_{\ell-1}^{-1}\cdots X_1^{-1}\right).
\end{eqnarray*}
The condition of irreducibility implies that
$\ft_1 \cap \cdots \cap \ft_\ell=\{0\}$, and it then follows from the above that
\eqref{eq-tan} can be solved for any $v\in su(N)$. This concludes
the argument that $df_X$ is a surjection whenever the $\ell$-tuple $X = (X_1, \ldots, X_\ell)$
is irreducible.
\end{proof}

Since both $\De_k$ and $\Ga_{\ep \si}$ preserve the product $X_1 \cdots X_k$, see \eqref{EpProduct}, we can
restrict from $Q_k \times Q_k$ to $H_k$.
 
Now we are in a position to define the invariant $h_{N, a}(\ep \si)$. 
Set
$\Ga^*_{\ep\si} = \Ga_{\ep\si}\,\cap\,H_k^*$ and $\De_k^* 
= \De_k\,\cap\, H_k^*$. Since $S_k$ is closed, it follows that both $\Ga^*$ and $\De_k^*$ are open submanifolds of $H_k^*$ of
dimension $k(N^2-N)$.  

Both $\De_k$ and $\Ga_{\ep\si}$ are compact, and so is their intersection
$\De_k \cap \Ga_{\ep\si}$. Consequently, as Proposition \ref{NoReducibles}
implies that every point in this intersection is irreducible, we have the following result.

\begin{corollary}
The intersection $\De_k^*\,\cap\,\Ga_{\ep\si}^* \subset H_k^*$ is 
compact.
\end{corollary}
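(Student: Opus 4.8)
The plan is to combine the two facts that have just been assembled. First, recall that points of $\De_k \cap \Ga_{\ep\si}$ are, by the identification made just before the statement, exactly the points of $Q_k \cap \Fix(\ep\si)$ sitting diagonally in $Q_k \times Q_k$. Since $Q_k = (C_A)^k$ is compact (it is a finite product of the flag variety $C_A \cong SU(N)/T^{N-1}$) and $\Fix(\ep\si)$ is closed in $\R_k$ (being the zero set of the continuous map $X \mapsto \ep\si(X) X^{-1}$, or rather the equalizer of $\ep\si$ and the identity), the intersection $Q_k \cap \Fix(\ep\si)$ is a closed subset of the compact space $Q_k$, hence compact. Equivalently, $\De_k$ and $\Ga_{\ep\si}$ are both compact subsets of $Q_k \times Q_k$, so their intersection $\De_k \cap \Ga_{\ep\si}$ is compact.

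Second, I would invoke Proposition \ref{NoReducibles}: every $X \in \Fix(\ep\si)$ is irreducible. Therefore no point of $\De_k \cap \Ga_{\ep\si}$ lies in the reducible locus $S_k$, i.e.\ $\De_k \cap \Ga_{\ep\si} \subseteq H_k^*$ (note that $\De_k \cap \Ga_{\ep\si} \subseteq H_k$ automatically, since both $\De_k$ and $\Ga_{\ep\si}$ preserve the product $X_1\cdots X_k$ by \eqref{EpProduct}). Consequently
\[
\De_k^* \cap \Ga_{\ep\si}^* \;=\; (\De_k \cap H_k^*) \cap (\Ga_{\ep\si} \cap H_k^*) \;=\; \De_k \cap \Ga_{\ep\si} \cap H_k^* \;=\; \De_k \cap \Ga_{\ep\si},
\]
the last equality because $\De_k \cap \Ga_{\ep\si}$ already lies in $H_k^*$. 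Thus $\De_k^* \cap \Ga_{\ep\si}^*$ coincides with the compact set $\De_k \cap \Ga_{\ep\si}$, and in particular it is compact.

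There is no real obstacle here: the corollary is essentially a two-line consequence of Proposition \ref{NoReducibles} together with the compactness of $Q_k$. The only point that requires a moment's care is making explicit that $\De_k \cap \Ga_{\ep\si}$, a priori only a subset of $Q_k \times Q_k$, in fact lands inside the open submanifold $H_k^*$ — but this is immediate once one notes that it lies in $H_k$ (product-preservation) and avoids $S_k$ (irreducibility). The same remark also shows that $\De_k^* \cap \Ga_{\ep\si}^*$ is not merely compact but is a \emph{closed} subset of the manifold $H_k^*$ that happens to be compact, which is exactly what is needed to make sense of the oriented intersection number after a compactly supported transversality perturbation.
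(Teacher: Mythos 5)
Your argument is correct and is exactly the paper's: compactness of $\De_k\cap\Ga_{\ep\si}$ follows from compactness of $Q_k$, and Proposition \ref{NoReducibles} guarantees this intersection avoids the reducible locus, so it coincides with $\De_k^*\cap\Ga_{\ep\si}^*$. You have merely spelled out the details (closedness of $\Fix(\ep\si)$, the inclusion into $H_k$ via \eqref{EpProduct}) that the paper leaves implicit.
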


The group $PU(N)$ acts freely by conjugation on each of $H_k^*$, 
$\De_k^*$, and $\Ga_{\ep\si}^*$, and the quotients by this 
action are the
manifolds we denote as
\[
\wh H_k = H_k^*/PU(N),\quad \wh \De_k = \De^*_k/PU(N), \quad\text{and}
\quad \wh \Ga_{\ep\si} = \Ga^*_{\ep\si}/PU(N).
\]
Here, the dimension of $\wh H_k$ equals $2k(N^2-N) - 2(N^2 -1)$, and 
both $\wh \De_k$ and 
$\wh \Ga_{\ep\si}$ are half-dimensional submanifolds of $\wh H_k$.
Since the intersection $\wh \De_k\,\cap\,\wh \Ga_{\ep\si}$ is 
compact, we can isotope $\wh \Ga_{\ep\si}$ into a submanifold 
$\wt \Ga_{\ep\si}$ using an isotopy with compact support 
so that the intersection $\wh \De_k\,\cap\,\wt \Ga_{\ep\si}$ is transverse and
consists of finitely many points. Define 
\[
h_{N, a}(\ep\si) = \#_{\wh H_k}\,(\wh\De_k\,\cap\,\wt \Ga_{\ep\si})  
\]
as the oriented intersection number. We will describe the orientations in the
following subsection. The intersection number $h_{N, a}(\ep\si)$ is independent of the choice of 
isotopy of $\wh \Ga_{\ep\si}$, and we denote
\[
h_{N, a}(\ep\si) = \langle\,\wh \De_k, \wh \Ga_{\ep\si}\,\rangle_{\wh H_k}.
\]

\subsection{Orientations}
The following argument is similar to the one found in section 3.4 of \cite{HS1}, and we include
it here for completeness.

First, observe that the conjugacy class $C_A \subset SU(N)$ is orientable, which follows for instance 
by identifying it with a flag variety. So choose an orientation for $C_A$ and give $Q_k =  (C_A)^k$
and $Q_k \times Q_k$ the induced product orientations. 
The diagonal $\De_k$ and the graph $\Ga_{\ep \si}$
are naturally diffeomorphic to $Q_k$ via projection and so an orientation for $Q_k$ determines
orientations for both $\De_k$ and $\Ga_{\ep \si}$.

Using the standard orientation of $SU(N)$, we obtain an orientation on 
$H_k^* = f^{-1}(I)$ using the base-fiber rule. 
Since the adjoint action of $PU(N)$ on $C_A$ is orientation preserving, the quotients
$\wh{H}_k, \wh{\De}_k,$ and $\wh{\Ga}_{\ep \si}$ are all orientable, and we
orient them using the base-fiber rule.

Reversing the orientation of $C_A$ reverses the orientation of $Q_k$ only when $k$ is odd,
and in this case it reverses the orientations of both $\wh{\De}_k,$ and $\wh{\Ga}_{\ep \si}$
but it does not affect the oriented intersection number  $\langle\,\wh \De_k, \wh \Ga_{\ep\si}\,\rangle_{\wh H_k}$.
This shows that the intersection number is actually independent of the choice of orientation on
the conjugacy class $C_A.$

\subsection{Independence of ${\boldsymbol \ep}$}
The next result shows that $h_{N, a}(\ep \si)$ is independent of the choice
of $\ep$ compatible with $a$.

\begin{proposition}   \label{indepofep}
Fix a link $L$ with $n>1$ components and an allowable $n$-tuple $a=(a_1,\ldots, a_n)$ of labels.
Fix also a braid $\si \in B_k$ with closure $\wh\si = L$. If $\ep, \ep' \in (\ZZ_N)^k$ are 
$k$-tuples compatible with $a$, i.e. satisfying Equation \eqref{Ep},   
then $h_{N, a}(\ep \si) = h_{N, a}(\ep' \si)$.
\end{proposition}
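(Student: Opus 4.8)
The plan is to reduce the equality $h_{N,a}(\ep\si) = h_{N,a}(\ep'\si)$ to the case where $\ep$ and $\ep'$ differ in exactly one coordinate, and then to exhibit an explicit diffeomorphism of $H_k^*$ carrying the pair $(\wh\De_k, \wh\Ga_{\ep\si})$ to $(\wh\De_k, \wh\Ga_{\ep'\si})$ (or at least to an oriented-intersection-equivalent pair). First I would observe that the set of $k$-tuples $\ep \in (\ZZ_N)^k$ compatible with $a$ — i.e. satisfying $\prod_{i\in I_j}\ep_i = \om^{a_j}$ for all $j$ — forms a coset of the subgroup $K = \{\de \in (\ZZ_N)^k \mid \prod_{i\in I_j}\de_i = 1 \text{ for all } j\}$. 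Since $n \geq 2$ (so each cycle $I_j$ has at least one element and $K$ is generated by elements supported on a single cycle with two "canceling" entries), any two compatible $\ep, \ep'$ are connected by a chain of modifications each of which changes $\ep$ by a single generator of $K$. Thus it suffices to treat $\ep' = \de\ep$ where $\de = (1,\ldots,\om,\ldots,\om^{-1},\ldots,1)$ has $\om$ in slot $p$ and $\om^{-1}$ in slot $q$ with $p,q$ in the same cycle $I_j$.

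The next step is to build the diffeomorphism. For $\de \in K$, define $\Phi_\de \colon Q_k \to Q_k$ that conjugates or centrally twists coordinates so that it intertwines $\ep\si$ with $\ep'\si$. Concretely, I expect the right map to be of the form $\Phi_\de(X)_i = (\prod_{\ell \leq i,\, \ell \in \text{relevant set}} \de_\ell^{\pm 1}) \cdot X_i$, chosen so that $\Phi_\de \circ (\ep\si) = (\ep'\si) \circ \Phi_\de$ as maps $Q_k \to Q_k$; because $\de_i \in \ZZ_N$ is central and lies in the stabilizer of nothing but acts trivially on conjugacy classes, $\Phi_\de$ preserves $Q_k = (C_A)^k$, preserves the product $X_1\cdots X_k$ (since $\prod_{i\in I_j}\de_i = 1$ and the cycles partition $\{1,\ldots,k\}$), hence preserves $H_k$, carries $S_k$ to $S_k$ (central twists don't change reducibility), and restricts to a diffeomorphism of $H_k^*$. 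The key identity to check is the intertwining relation, which should follow from Equation \eqref{BraidConj} together with the fact that $\si$ permutes coordinates within each cycle $I_j$ and the twist $\de$ is constant-product along $I_j$. Having this, $\Phi_\de$ sends $\Ga_{\ep\si}$ to $\Ga_{\ep'\si}$ and sends $\De_k$ to $\De_k$ (a central twist of $(X,X)$ by the same factors is again diagonal — one must verify the factors on the two sides of $\De_k$ agree, which they do since we apply $\Phi_\de$ diagonally on $Q_k \times Q_k$). Passing to the $PU(N)$-quotient, $\Phi_\de$ descends to a diffeomorphism $\wh\Phi_\de$ of $\wh H_k$ with $\wh\Phi_\de(\wh\De_k) = \wh\De_k$ and $\wh\Phi_\de(\wh\Ga_{\ep\si}) = \wh\Ga_{\ep'\si}$, so the oriented intersection numbers agree up to a sign that is $+1$ because $\Phi_\de$ is, up to the $PU(N)$ action, isotopic to the identity (the one-parameter family $t \mapsto \Phi_{\de^t}$ makes sense after replacing $\om^{\pm 1}$ by $e^{\pm 2\pi i t/N}$, giving a path of diffeomorphisms of $H_k^*$ from $\mathrm{id}$ to $\Phi_\de$, though this path does not respect the monodromy $Q_k$; alternatively one checks directly that $\wh\Phi_\de$ is orientation-preserving from the base-fiber construction since it acts by left translations, which are orientation-preserving on the Lie group $SU(N)$ and on each $C_A$).

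The main obstacle I anticipate is pinning down the precise formula for $\Phi_\de$ and verifying the intertwining identity $\Phi_\de \circ (\ep\si) = (\ep'\si)\circ\Phi_\de$: because $\si$ does not merely permute the $x_i$ but conjugates them by words $W$ (Equation \eqref{BraidConj}), one must be careful that the central correction factors introduced by $\Phi_\de$ are not disturbed by these conjugations — here it is essential that the factors are central in $SU(N)$, so conjugation by $W$ leaves them untouched, and that the "running product" of $\de$-factors is organized along the cycle structure of $\bar\si$ so that it closes up consistently (this is exactly where condition that $\prod_{i\in I_j}\de_i = 1$ is used). A secondary, more bookkeeping-style obstacle is the orientation sign: one must confirm that $\wh\Phi_\de$ preserves the orientations of $\wh H_k$, $\wh\De_k$, and $\wh\Ga_{\ep\si}$ defined via the base-fiber rule in the preceding subsection; since $\Phi_\de$ acts by left translation by a fixed tuple of central elements, it is orientation-preserving on $SU(N)^k$, on $Q_k$, and (being equivariant) on the $PU(N)$-quotients, so the sign is $+1$ and we conclude $h_{N,a}(\ep\si) = h_{N,a}(\ep'\si)$.
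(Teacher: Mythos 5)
Your overall strategy is the same as the paper's: produce a central twist $\tau(X)=\de X$, with the multiplier $\de$ built as a running product of $\ep_j(\ep'_j)^{-1}$ along the cycles of $\bar\si$ (the paper defines it by the recursion $\de_{(j)^{\bar\si}}=\de_j\ep_j(\ep'_j)^{-1}$ with value $1$ at the start of each cycle, closure around a cycle being exactly where compatibility of both $\ep$ and $\ep'$ with $a$ enters), show that $\tau\times\tau$ fixes $\wh\De_k$ and carries $\wh\Ga_{\ep\si}$ to $\wh\Ga_{\ep'\si}$, and conclude equality of intersection numbers. Your preliminary reduction to generators of the subgroup $K$ is harmless but unnecessary; the paper handles arbitrary compatible $\ep,\ep'$ in one step.

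There is, however, a genuine gap in your orientation argument. You claim the twist is orientation-preserving on $Q_k$ and on each factor $C_A$ ``since it acts by left translations.'' Left translation by $\om^d I$ preserves the orientation of the group $SU(N)$, but its restriction to $C_A$ need not preserve the orientation of $C_A$: writing $\om A=\Pi A\Pi^{-1}$ for a cyclic-shift representative $\Pi$ of the Weyl group, the map $X\mapsto\om X$ corresponds under $C_A\cong SU(N)/T^{N-1}$ to right translation by $\Pi$, which acts on the fundamental class of the flag manifold by the sign $(-1)^{N-1}$ of the $N$-cycle. For $N=2$ it is literally the antipodal map on $C_A\cong S^2$, which reverses orientation; and since your $\Phi_\de$ twists a ``running-product'' subset of coordinates whose cardinality can be odd, the sign does not cancel even for your elementary generators. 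Your proposed repair by the isotopy $t\mapsto\Phi_{\de^t}$ is, as you yourself note, unavailable because the path leaves $Q_k$. The correct resolution, which the paper uses, is that the diagonal map $\tau\times\tau$ on $Q_k\times Q_k$ is automatically orientation-preserving (its sign is a square), so the induced map on $\wh H_k$ preserves the ambient orientation, while its restrictions to $\wh\De_k$ and to $\wh\Ga_{\ep\si}$ reverse or preserve orientation simultaneously; reversing the orientations of both submanifolds leaves the oriented intersection number $\langle\wh\De_k,\wh\Ga_{\ep\si}\rangle_{\wh H_k}$ unchanged. With that substitution your argument closes.
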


\begin{proof}

We will define an orientation preserving automorphism  $\phi \colon \wh H_k \to \wh H_k$ such 
that $\phi(\wh \De_k) = \wh \De_k$ and $\phi(\wh \Ga_{\ep \si}) = \wh \Ga_{\ep' \si}$.
Write  the permutation 
\begin{equation} \label{cycles-2}
\bar{\si} = (i_1, \ldots, i_{k_1})(i_{k_1 + 1}, \ldots, i_{k_2}) \cdots (i_{k_{n-1}+1}, \ldots, i_{k_n})
\end{equation}
as a product of disjoint cycles as in Equation \eqref{Cycles} and
define $\de = (\de_1, \ldots, \de_k) \in (\ZZ_N)^k$ recursively with initial values
\begin{equation} \label{initial}
\de_{i_1} = 1 = \de_{i_{k_1+1}} =  \cdots = \de_{i_{k_{n-1}+1}}
\end{equation}
and by setting
\begin{equation} \label{induct}
\de_{(j)^{\bar \si}} \;=\; \de_j\,\ep_j\,(\ep'_j)^{-1}.
\end{equation}
Writing $\bar \si$ as a product of disjoint cycles as in \eqref{cycles-2} and noting that $\ep$ and $\ep'$ both satisfy Equation \eqref{Ep}, repeated application of
the recursion \eqref{induct}
shows that the definition of $\de = (\de_1, \ldots, \de_k)$ is compatible with the initial values taken in \eqref{initial}.

Define the diffeomorphism $\tau \colon Q_k \to Q_k$ by $$\tau (X) = \de X = (\de_1 X_1, \ldots, \de_k X_k).$$  Note that $\tau$ may be
orientation preserving or reversing.  Furthermore, $\tau$ preserves irreducibility and commutes with 
conjugation.  

Consider the product map $\tau \times \tau \colon Q_k \times Q_k \to Q_k \times Q_k$.  Observe
that $\tau \times \tau$ preserves the orientation of $Q_k \times Q_k$ and hence the induced map
$\phi \colon \wh H_k \to \wh H_k$ is orientation preserving.

Since $\tau$ may be orientation reversing, $\phi$ restricted to $\wh \De_k$ or $\wh \Ga_{\ep \si}$ 
may be orientation reversing.  The key observation is that if $\phi$ is orientation reversing on one, then it must be orientation reversing on the other.  
Hence, $\phi$ preserves the intersection number $h_{N, a} (\ep \si)$, 
\[
\langle \,\wh \De_k, \wh \Ga_{\ep \si} \, \rangle_{\wh H_k} = 
\langle \,\phi(\wh \De_k),\, \phi(\wh \Ga_{\ep \si}) \, \rangle_{\wh H_k}.
\]

Clearly, $\phi(\wh \De_k) = \wh \De_k$, so to finish off the proof we check that 
$\phi(\wh \Ga_{\ep \si}) = \wh \Ga_{\ep' \si}$, or 
 that the pair $(\de X,\de\ep\si(X)) \in \wh \Ga_{\ep' \si}$.
By the following calculation
\[
(\de X, \de \ep \si (X)) =
(\de X, \de \ep \si (\de^{N-1} \de X)) = 
(\de X, \de \ep \bar{\si}(\de^{N-1})\,\si(\de X)), 
\]
this will follow once we verify that $\de \ep \bar{\si}(\de^{N-1})= \ep'$. Since $\de_j^{N-1}= \de_j^{-1},$ this is equivalent to showing that $\de_j \ep_j =  \bar{\si}(\de_j) \ep'_j$ for $j=1,\ldots, k$, which follows directly from Equation \eqref{induct}, and this completes the proof of the proposition.
\end{proof}

\subsection{Independence under Markov moves}
Based on the previous result, we denote $h_{N, a}(\ep \si)$ by $h_{N, a}(\si)$ assuming that a choice of compatible $\ep$  
has been made. 
In this subsection, we show that  $h_{N, a}$ defines an invariant of $n$-component links, and
this is achieved by showing that $h_{N, a}(\si)$ is invariant under the Markov moves.

Recall that two braids $\si, \tau \in B_k$ have isotopic closures $\wh\si = \wh\tau$ if and only if $\si$ can be 
obtained from $\tau$ by a finite sequence of Markov moves; see 
for example \cite{Birman}.  The first Markov move replaces $\si \in 
B_k$ by $\xi^{-1} \si \xi \in B_k$ for $\xi \in B_k$, and the second
Markov move exchanges $\si \in B_k$ with $\si  \si_k^{\pm 1} 
\in B_{k+1}$. 

The following propositions give the $SU(N)$ analogues of the $SU(2)$ results in \cite[Propositions 4.2 \& 4.3]{HS1}
(cf. the proof of \cite[Theorem 1.8]{Lin}).

\begin{proposition}   \label{markov1}
The quantity $h_{N, a}(\si)$ is invariant under type 1 Markov moves.
\end{proposition}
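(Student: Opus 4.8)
The plan is to model the type~1 Markov move at the level of representation spaces by the diffeomorphism induced by $\xi$, and then to absorb the resulting change of twisting data using Proposition~\ref{indepofep}. Fix a braid $\si \in B_k$ with $\wh\si = L$, write $\bar\si = (I_1)\cdots(I_n)$ with $I_j$ the cycle corresponding to the component $\ell_j$, and choose $\ep = (\ep_1,\ldots,\ep_k) \in (\ZZ_N)^k$ compatible with the labels $a$. Put $\tau = \xi^{-1}\si\xi$ and $\ep' = \bar\xi^{-1}(\ep) \in (\ZZ_N)^k$. Since $\bar\tau = \bar\xi^{-1}\bar\si\bar\xi$, the cycle of $\bar\tau$ corresponding to $\ell_j$ is $I'_j = \{(i)^{\bar\xi} \mid i \in I_j\}$, and I would first check that $\ep'$ is compatible with $a$ for $\tau$: because $i \mapsto (i)^{\bar\xi}$ restricts to a bijection $I_j \to I'_j$ and $(\bar\xi^{-1}(\ep))_i = \ep_{(i)^{\bar\xi^{-1}}}$, Equation~\eqref{Ep} gives $\prod_{i \in I'_j}\ep'_i = \prod_{i \in I_j}\ep_i = \om^{a_j}$.

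The geometric core is the map $\Phi = \wt\varrho(\xi^{-1}) \times \wt\varrho(\xi^{-1})$ on $Q_k \times Q_k$. I would record that $\wt\varrho(\xi^{-1})$ is a bijection of $Q_k$: by \eqref{BraidConj} each coordinate of $\wt\varrho(\xi^{-1})(X)$ is a conjugate of some $X_j$ and hence lies in $C_A$; it preserves the product $X_1\cdots X_k$ by \eqref{Product}; it preserves irreducibility, since $\wt\varrho(\xi^{-1})(X)$ and $X$ generate the same subgroup of $SU(N)$; and it commutes with conjugation by $PU(N)$. Hence $\Phi$ restricts to a diffeomorphism of $H_k^*$ that commutes with the $PU(N)$-action, and descends to a diffeomorphism $\wh\Phi$ of $\wh H_k$. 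Next I would compute the images of the two half-dimensional submanifolds. Clearly $\Phi(\De_k) = \De_k$. For the graph, writing $\ep\si(X)$ as the componentwise product $\ep\cdot\wt\varrho(\si)(X)$ and using the intertwining identity $\wt\varrho(\xi^{-1})(\ep Z) = \bar\xi^{-1}(\ep)\,\wt\varrho(\xi^{-1})(Z)$ together with $\wt\varrho(\xi^{-1})\,\wt\varrho(\si)\,\wt\varrho(\xi) = \wt\varrho(\tau)$, the substitution $Y = \wt\varrho(\xi^{-1})(X)$ turns $\{(X,\ep\si(X))\}$ into $\{(Y,\ep'\tau(Y))\}$, so $\Phi(\Ga_{\ep\si}) = \Ga_{\ep'\tau}$. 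Passing to quotients, $\wh\Phi(\wh\De_k) = \wh\De_k$ and $\wh\Phi(\wh\Ga_{\ep\si}) = \wh\Ga_{\ep'\tau}$.

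The main obstacle will be the orientation bookkeeping, which I expect to be the only delicate step and which runs exactly as at the end of the proof of Proposition~\ref{indepofep}. Even though $\wt\varrho(\xi^{-1})$ may reverse the chosen orientation of $Q_k = (C_A)^k$, its square $\Phi$ preserves the orientation of $Q_k \times Q_k$, so $\wh\Phi$ preserves the orientation of $\wh H_k$ by the base--fiber rule. Under the identifications of $\wh\De_k$ and of $\wh\Ga_{\ep\si}$ with a common quotient manifold via the first projection, the restriction of $\wh\Phi$ to each is the same map, the one induced by $\wt\varrho(\xi^{-1})$; so $\wh\Phi$ either preserves or reverses orientation on both simultaneously, and since these submanifolds have equal dimension the oriented intersection number is unchanged in either case. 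Therefore
\[
h_{N,a}(\si) = \langle \wh\De_k, \wh\Ga_{\ep\si}\rangle_{\wh H_k} = \langle \wh\Phi(\wh\De_k), \wh\Phi(\wh\Ga_{\ep\si})\rangle_{\wh H_k} = \langle \wh\De_k, \wh\Ga_{\ep'\tau}\rangle_{\wh H_k} = h_{N,a}(\ep'\tau),
\]
and since $\ep'$ is compatible with $a$, Proposition~\ref{indepofep} gives $h_{N,a}(\ep'\tau) = h_{N,a}(\tau) = h_{N,a}(\xi^{-1}\si\xi)$, which is the assertion.
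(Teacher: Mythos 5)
Your proposal is correct and follows essentially the same route as the paper: both arguments use the diffeomorphism of $\wh H_k$ induced by the conjugating braid $\xi$, check that it preserves $Q_k$, the product condition, irreducibility, and the $PU(N)$-action, that it fixes the diagonal and carries one graph to the other after transporting $\ep$ by the permutation $\bar\xi$, and then compare oriented intersection numbers (the paper notes directly that permuting even-dimensional factors and conjugating is orientation preserving, while you reach the same conclusion via the ``square preserves orientation'' argument from Proposition~\ref{indepofep}; both are fine).
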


\begin{proof}
Suppose $\si \in B_k$ is a braid with  
$$\bar\si= (I_1) \cdots (I_n) = (i_1,\ldots, i_{k_1}) \cdots (i_{k_{n-1}+1},\ldots, i_{k_n})$$
in multi-index notation. Given a braid $\xi \in B_k,$ let $\si'=  \xi^{-1} \si \xi$ 
and note that $\overline{\si'}$ has
the same cycle structure as $\bar{\si}$, in fact it is given by
$$\overline{\si'}= \left(I^\xi_1\right) \cdots \left(I^\xi_n\right) = \left((i_1)^{\bar{\xi}},\ldots, (i_{k_1})^{\bar{\xi}}\right) \cdots \left((i_{k_{n-1}+1})^{\bar{\xi}},\ldots, (i_{k_n})^{\bar{\xi}}\right).$$

We choose $\ep' \in (\ZZ_N)^k$ compatible with the given labels,
which means that $\ep'$ satisfies Equation \eqref{Ep} with respect to the braid $\si'$, namely
$$\prod_{i \in I^\xi_j} \ep'_i=\om^{a_j}$$
holds for $j=1,\ldots, k.$ 
Notice that if we define the $k$-tuple $\ep$ by setting $\ep_i = \ep'_{(i)^{\bar{\xi}}},$ then one can show that
$\ep$  satisfies Equation \eqref{Ep} with respect to 
$\bar\si= (I_1) \cdots (I_n)$, hence $\ep$ is also compatible with the given labels.

The braid $\xi$ determines a map $\xi \colon Q_k \to Q_k,$ and since it acts by permutation 
and conjugation on each of the factors in $Q_k = C_A \times \cdots \times C_A,$ the fact that $C_A$ is even-dimensional
implies that this map is orientation preserving. This induces the map $ \xi \times  \xi$ on $Q_k \times Q_k$
preserving irreducibility,  commuting with the adjoint action of $PU(n)$, and preserving Equation
\eqref{Product}, thus we obtain a well-defined orientation preserving map
$ \xi \times \xi \colon \wh{H}_k \to \wh{H}_k.$
 
Clearly, $( \xi \times \xi)(\wh{\De}_k) =  \wh{\De}_k$, so the diagonal is preserved,
and we consider  the effect of $ \xi \times  \xi$ on the graph $ \wh\Ga_{\ep' \si'}$.
If $(X, \ep' \si'(X)) \in \wh\Ga_{\ep' \si'},$
then
$$( \xi \times \xi)(X, \ep' \si'(X)) = ( \xi \times  \xi)(X, \ep' \xi^{-1} \si \xi (X)) = ( \xi(X), \xi(\ep') \si  \xi (X))
\in \wh\Ga_{\ep \si},$$
since $\xi(\ep')_i = \ep'_{(i)^{\bar{\xi}}} = \ep_i$.
Thus $(\xi \times \xi)(\wh\Ga_{\ep' \si'}) =  \wh\Ga_{\ep \si},$ and we see that
\begin{eqnarray*}
h_{N, a}(\si') =  \langle \,\wh \De_k, \wh \Ga_{\ep' \si'} \, \rangle_{\wh H_k} &=& 
\langle \,(\xi \times \xi) (\wh\De_k), (\xi \times \xi)( \wh \Ga_{\ep' \si'}) \, \rangle_{\xi \times \xi(\wh H_k)} \\
&= &  \langle \,\wh \De_k, \wh \Ga_{\ep \si} \, \rangle_{\wh H_k} = h_{N, a}(\si).
\end{eqnarray*}
\end{proof}

The next result is established using the same argument that is used to prove \cite[Proposition 4.3]{HS1}
and \cite[Theorem 1.8]{Lin}. We leave the details of the proof to the reader.
\begin{proposition}   \label{markov2}
The quantity $h_{N, a}(\si)$ is invariant under type 2 Markov moves.
\end{proposition}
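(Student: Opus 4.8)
The plan is to show that the second Markov move, which replaces $\si \in B_k$ by $\si\si_k^{\pm 1} \in B_{k+1}$, does not change the intersection number, following the template of \cite[Proposition 4.3]{HS1} and \cite[Theorem 1.8]{Lin}. The first step is bookkeeping about labels: if $\bar\si = (I_1)\cdots(I_n)$ with the $k$-th strand lying in, say, $I_n$, then $\overline{\si\si_k^{\pm 1}}$ merges the new strand $k+1$ into that same cycle, so the number of components is unchanged and the labels $(a_1,\ldots,a_n)$ remain allowable. Given a compatible $k$-tuple $\ep = (\ep_1,\ldots,\ep_k)$ for $\si$, I would produce a compatible $(k+1)$-tuple for $\si\si_k^{\pm 1}$ by setting $\ep_{k+1} = 1$ and leaving the other entries alone (or, in the $\si_k^{-1}$ case, making the analogous adjustment so that \eqref{Ep} still holds for the enlarged cycle); by Proposition \ref{indepofep} the particular choice does not matter.

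The geometric heart is a comparison of fixed-point sets. Writing $X = (X_1,\ldots,X_{k+1}) \in Q_{k+1}$, the fixed-point equation for $\ep\,(\si\si_k)$ forces $X_{k+1} = X_k$ together with $X_i = \ep_i\si(X)_i$ for $i \le k-1$ and a relation pinning $X_k$ against $\si(X)_k$; unwinding $\si_k$ on the last two strands shows the extra coordinate is redundant and that the projection $Q_{k+1} \to Q_k$ forgetting the last factor restricts to a diffeomorphism from $\Fix(\ep\,(\si\si_k^{\pm1})) \cap Q_{k+1}$ onto $\Fix(\ep\si) \cap Q_k$. One must check this identification is compatible with the product condition \eqref{Product}, carries irreducibles to irreducibles, and is $PU(N)$-equivariant, so that it descends to a map $\wh H_{k+1} \dashrightarrow \wh H_k$ matching $\wh\Ga_{\ep(\si\si_k^{\pm1})}$ with $\wh\Ga_{\ep\si}$ and $\wh\De_{k+1}$ with $\wh\De_k$ on a neighborhood of the intersection.

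I would then verify that the identification is orientation compatible. Here one uses that $C_A$ is even-dimensional, so the various permutation-and-conjugation reshufflings introduced by $\si_k$ act by orientation-preserving diffeomorphisms on the factors of $Q_{k+1}$, and one tracks the base-fiber orientation through the map $f(X) = X_1\cdots X_{k+1}$ exactly as in the Orientations subsection. Once the local diffeomorphism near the (compact, irreducible) intersection is shown to respect all the structures and orientations, we get
\[
h_{N,a}(\si\si_k^{\pm1}) = \langle\,\wh\De_{k+1},\wh\Ga_{\ep(\si\si_k^{\pm1})}\,\rangle_{\wh H_{k+1}} = \langle\,\wh\De_k,\wh\Ga_{\ep\si}\,\rangle_{\wh H_k} = h_{N,a}(\si),
\]
and combined with Proposition \ref{markov1} this proves invariance under all Markov moves, hence well-definedness of $h_{N,a}(L)$.

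The main obstacle I anticipate is not the algebra of the fixed-point identification but keeping the orientation bookkeeping honest across the dimension jump from $\wh H_k$ to $\wh H_{k+1}$: one must confirm that the extra $C_A$ factor and the extra $SU(N)$ in the target of $f$ cancel in the base-fiber rule without a sign, and that the stabilization is transverse so no spurious intersection points are created or destroyed. This is precisely the point handled carefully in \cite{HS1} for $SU(2)$, and the only genuinely new input needed here is the parity observation that $\dim C_A = N^2 - N$ is always even, which makes the relevant reshuffling maps orientation preserving for every $N$.
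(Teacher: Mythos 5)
Your outline follows exactly the standard stabilization argument that the paper invokes: the paper gives no details for this proposition, simply citing \cite[Proposition 4.3]{HS1} and \cite[Theorem 1.8]{Lin}, and your identification of fixed-point sets via $X_{k+1}=X_k$ (with the choice $\ep_{k+1}=1$ forced by compatibility) together with the orientation and transversality bookkeeping across the dimension jump is precisely the content of those cited proofs. The one step you flag but do not carry out---the local intersection-index comparison between $\wh H_k$ and $\wh H_{k+1}$---is exactly the step the paper also leaves to the reader, so your proposal is the same approach at essentially the same level of detail.
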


\section{Computations} \label{sec4}
In this section, we perform computations of $h_{N,a}(L)$ for various links $L$ and we prove a vanishing condition for  
$h_{N,a}(L)$ for split links.

\subsection{The Hopf link and chain links}
The  chain link $L$  is obtained as the closure of the 
braid   $\si= \si_1^2 \si_2^2 \cdots \si_{n-1}^2 \in B_n.$ 
In this subsection, we  compute $h_{N,a}(L)$ for $L$ the Hopf link and the chain link with $N=n$ components. In particular, if $d$ is chosen relatively prime to $N$ and $a = (d, \ldots, d),$ then we will show that  $h_{N,a}(L)=0$ for the chain link with $n>2$ components. 
For $n=2,$ $L$ is just the Hopf link, which we denote by $H \subset S^3$. 
In  \cite{HS1}, it is proved that $h_{2,a}(H) = \pm 1$ for $a=(1,1)$. We generalize this by showing  that  $h_{N,a}(H) = \pm 1$ if $a=(d, N-d)$, where $d$ satisfies $1\leq d <N$ and is relatively prime to $N$.

The next result will be used repeatedly in the computations that follow.
 
\begin{theorem} \label{thm-pair}
Suppose $N \ge 2$ and set $\om = e^{2 \pi i/N}$ and $\xi =e^{2 \pi i/2N}$. 
Any pair of matrices $(X,Y) \in SU(N) \times SU(N)$ satisfying  
$[X,Y] = \om I$ is, up to  conjugation, given by 
$$X  = \begin{cases} \diag(1, \om, \ldots, \om^{N-1}) & \text{if $N$ is odd,} \\
\diag(\xi, \xi \om, \ldots, \xi \om ^{N-1}) & \text{if $N$ is even,}
\end{cases}$$
and $$Y = \begin{pmatrix}
0& \dots& &\pm1 \\
1& \ddots& &\vdots\\
 &\ddots &  \\
     0&  &1 &0 \end{pmatrix}.$$ 
The pair $(X,Y) \in SU(N) \times SU(N)$ determines an irreducible projective $SU(N)$ representation of the free abelian group $\ZZ \oplus \ZZ$ of rank 2.   
\end{theorem}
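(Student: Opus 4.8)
The plan is to classify solutions of the equation $[X,Y] = \om I$ in $SU(N)$, where $[X,Y] = XYX^{-1}Y^{-1}$ and $\om = e^{2\pi i/N}$ is a primitive $N$-th root of unity. The starting point is the observation that $XY = \om\, YX$, so $X$ and $Y$ generate (a quotient of) a Heisenberg-type group, and this should force the representation to be, up to conjugation and scaling, the standard $N$-dimensional irreducible representation of that group --- the ``clock-and-shift'' pair.

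First I would extract information about $X$ alone. Conjugating the relation $XY = \om YX$ by $Y$ gives $Y X Y^{-1} = \om^{-1} X$ (or its inverse), so the matrix $X$ is conjugate to $\om^{\pm 1} X$. Taking determinants forces nothing new (since $\det(\om^{\pm1} X) = \om^{\pm N}\det X = \det X$ automatically), but comparing characteristic polynomials shows $p_X(t) = p_X(\om^{\mp 1} t)$, so $p_X$ is a polynomial in $t^N$; being monic of degree $N$ with the right constant term, it must be $t^N - c$ for the appropriate constant $c$ with $c^N$ determined by $\det X = 1$. The only unimodular possibilities giving $\det X = 1$ are $c = 1$ when $N$ is odd and $c = -1$ when $N$ is even, i.e. the eigenvalues of $X$ are exactly $\{1, \om, \ldots, \om^{N-1}\}$ for $N$ odd and $\{\xi, \xi\om, \ldots, \xi\om^{N-1}\}$ for $N$ even, each with multiplicity one. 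In particular $X$ has $N$ distinct eigenvalues and is conjugate to the diagonal matrix displayed in the statement; conjugate so that $X$ is exactly that diagonal matrix.

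Next I would pin down $Y$. Writing $X = \diag(\la_1, \ldots, \la_N)$ with the $\la_i$ the distinct eigenvalues ordered so that $\la_{i+1} = \om \la_i$ (indices mod $N$), the relation $XYX^{-1} = \om Y$ read entrywise says $\la_i (Y)_{ij} \la_j^{-1} = \om (Y)_{ij}$, i.e. $(Y)_{ij} = 0$ unless $\la_i = \om \la_j$, i.e. unless $i \equiv j+1 \pmod N$. So $Y$ is a ``cyclic weighted shift'': $(Y)_{i,i-1} = b_i$ for some scalars $b_i$ and all other entries zero. Unitarity forces $|b_i| = 1$ for all $i$, and $\det Y = (-1)^{N-1} b_1 b_2 \cdots b_N = 1$ fixes the product of the $b_i$. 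Then conjugating $Y$ by a diagonal unitary matrix $D = \diag(d_1,\ldots,d_N)$ leaves $X$ fixed and replaces $b_i$ by $d_i b_i d_{i-1}^{-1}$; a standard telescoping argument (solve for the $d_i$ successively) shows one can arrange all $b_i$ equal to a single value $\zeta$, and then $\zeta^N = (-1)^{N-1} b_1\cdots b_N = (-1)^{N-1}\cdot(\text{the fixed product})$, which in all cases leaves $\zeta^N = \pm 1$ after accounting for signs, so $\zeta = \pm 1$ --- giving exactly the matrix $Y$ in the statement, with the $\pm$ sign that cannot be further normalized away. I would double-check the bookkeeping of the signs and the $\xi$-factors here, since the even/odd dichotomy is where arithmetic slips are easiest. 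Finally, irreducibility is immediate: a subspace invariant under the diagonal matrix $X$ with distinct eigenvalues must be spanned by a subset of the standard basis vectors $e_i$, and $Y$ cyclically permutes these (up to nonzero scalars), so no proper nonzero subset is $Y$-invariant; hence the pair is irreducible, and since $\al(\mu) = X$, $\al(\la) = Y$ satisfies $\al(\mu)\al(\la)\al(\mu)^{-1}\al(\la)^{-1} = \om I \in \ZZ_N$, it defines an irreducible projective representation of $\ZZ \oplus \ZZ$ with the nontrivial cocycle determined by $\om$.

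The main obstacle I anticipate is purely organizational rather than conceptual: keeping the even/odd cases, the placement of the $\xi = e^{2\pi i/2N}$ factor, the sign of the exponent of $\om$ in $YXY^{-1} = \om^{\pm 1} X$, and the resulting $\pm 1$ in $Y$ all consistent. The representation-theoretic content --- that $XY = \om YX$ with $\om$ primitive forces the Heisenberg/Weyl pair up to conjugacy --- is classical (Stone--von Neumann in finite dimensions), so the real work is verifying that the normalization lands precisely on the matrices claimed, in particular that $\det = 1$ cannot be achieved after absorbing scalars except in the stated form, and that no residual freedom beyond the single sign remains.
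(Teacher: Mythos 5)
Your proposal is correct and follows essentially the same route as the paper's proof: diagonalize $X$ using the fact that its spectrum is invariant under multiplication by $\om$, read off from $XY=\om YX$ that $Y$ is a cyclic weighted shift with unimodular weights, and normalize the weights by conjugating with the diagonal stabilizer of $X$, the complete conjugation invariant being the product of the weights, which $\det Y=1$ pins to $(-1)^{N-1}$. The one slip is that $\zeta^N=\pm 1$ does not force $\zeta=\pm 1$; but since the product of the $b_i$ is the full invariant, your normal form is still conjugate to the stated one (the paper sidesteps this by telescoping all the weight into the single corner entry, which then equals $(-1)^{N-1}$).
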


\begin{proof}
First notice that $XYX^{-1}Y^{-1} = \om I$ if and only if $Y^{-1} X Y = \om X.$ 
Every element of $SU(N)$ is conjugate to a diagonal matrix, and so we can write
$$X 
= \begin{pmatrix}
\la_1& &0 \\
& \ddots& \\
0&& \la_N  \end{pmatrix},$$    
where $\la_i\in U(1)$ and $\la_1\cdots \la_N =1.$ However, because $X$ is conjugate to $\om X$, we must have 
$$\{\la_1,\ldots, \la_N\} = \{ \om \la_1, \ldots, \om \la_N \}.$$
Reordering the terms, we can arrange that 
$\la_i = \om^{i-1} \la_1$ for $i=1,\ldots, N$. Since $\det X =1$, we have
$\la_1^N = (-1)^{N-1},$ and so without loss of generality we can take 
$$\la_1 = \begin{cases} 1& \text{if $N$ is odd,} \\
\xi &\text{if $N$ is even.}
\end{cases}$$
This shows that $X$ is of the required form.

Next, observe that $XYX^{-1} Y^{-1} = \om I$ if and only if $XY = \om YX.$
Writing $Y = (y_{ij})$ and comparing the $(ij)$ entries on right and left, it follows that
$$\om^{i-1} y_{ij} = y_{ij} \om^{j}.$$
This implies that $y_{ij}=0$ unless $i \equiv j+1 \mod N$.
Furthermore, since $Y$ has only one nonzero entry in each row and column, each entry must lie in $U(1)$ and we find that
 $$Y = \begin{pmatrix}
0& \dots& & \mu_1 \\
\mu_2& \ddots& &\vdots\\
 &\ddots &  \\
     0&  &\mu_N &0 \end{pmatrix},$$
where $\mu_i \in U(1)$ satisfy $\mu_1\cdots \mu_N = (-1)^{N-1}$
(since $\det Y =1$).
Because $X$ is diagonal with $N$ distinct eigenvalues, the stabilizer subgroup $\Stab(X)$ is a copy of the standard maximal torus, i.e.
$$ \Stab(X) = \{ \diag(\th_1, \ldots, \th_N) \mid \th_i \in U(1), \th_1 \cdots \th_N =1 \}  
\cong T^{N-1}.$$ A matrix $P = \diag(\th_1, \ldots, \th_N) \in \Stab(X)$ acts on $Y$ by
$$P Y P^{-1} =   \begin{pmatrix}
0& \dots& & \th_1 \th_N^{-1} \mu_1  \\
\th_2 \th_1^{-1} \mu_2& \ddots& &\vdots\\
 &\ddots &  \\
     0&  & \th_N \th_{N-1}^{-1} \mu_N  &0 \end{pmatrix}.$$

Setting
\begin{align*}
\th_1 &=\mu_1^{-1}\\
\th_2 &= \th_1 \; \mu_2^{-1} = \mu_1^{-1} \mu_2^{-1} \\
 \th_3 &= \th_2 \; \mu_3^{-1}=  \mu_1^{-1} \mu_2^{-1}\mu_3^{-1}\\
& \;\; \vdots \\
\th_N &= \th_{N-1} \; \mu_N^{-1}= \mu_1^{-1}  \cdots \mu_N^{-1} =(-1)^{N-1},
\end{align*} 
 it follows that 
$$P Y P^{-1}=  \begin{pmatrix}
0& \dots& & (-1)^{N-1} \\
1& \ddots& &\vdots\\
 &\ddots &  \\
     0&  &1 &0 \end{pmatrix}.$$
   Since $PX P^{-1} =X, $ this shows
 that, up to conjugation, $Y$ is of the required form.
 Irreducibility of the pair $(X,Y)$ follows from the fact that $\Stab(X) \cap \Stab(Y) = \ZZ_N.$
\end{proof}

\begin{remark}
If $XYX^{-1} = \om Y$, then  $X^d Y X^{-d} = \om^d Y$ by induction.
This shows that if $(X,Y)$ are as in Theorem \ref{thm-pair}, then $(X^d,Y)$ satisfies
$[X^d,Y] = \om^d I$. Using this observation, one can show  that solutions $(X',Y')$ to  $[X',Y']= \om^d I$ are
irreducible and unique up to conjugation provided $d$ is relatively prime to $N$.
This fails if $d$ is not relatively prime to $N$; when $N=4$ and $d=2,$ one can construct  
non-conjugate families of pairs $(X,Y) \in SU(4)\times SU(4)$ satisfying
$[X,Y] = -I$. 
All of these pairs are reducible.
\end{remark}

We now use this to evaluate $h_{N,a}(H)$ for the Hopf link $H$.

\begin{proposition} \label{prop-hopf}
Suppose $H$ is the Hopf link and $1 \leq d < N$ is relatively prime to $N$. Then
$h_{N,a}(H) = \pm 1$ for $a = (d,N-d)$.
\end{proposition}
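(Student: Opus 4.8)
The plan is to realize the Hopf link as the closure of the braid $\si_1^2 \in B_2$ and compute the oriented intersection number directly. First I would choose a compatible $2$-tuple $\ep = (\ep_1, \ep_2) \in (\ZZ_N)^2$; since the permutation $\bar\si_1^2$ is trivial, $\si_1^2$ has two strands forming two components, so the compatibility condition \eqref{Ep} reads $\ep_1 = \om^{a_1} = \om^d$ and $\ep_2 = \om^{a_2} = \om^{N-d}$ (and indeed $a_1 + a_2 = N$ as required by (iii), while $\gcd(d, N-d) = \gcd(d,N)$ is prime to $N$ by the hypothesis on $d$, so $a$ is allowable). Because $d$ is relatively prime to $N$, Proposition \ref{A} applies and $\Fix(\ep\si_1^2) \subset Q_2$ automatically, so there are no monodromy constraints to impose separately.

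Next I would solve the fixed-point equations explicitly. Using the formula $\si_1^2(X) = (X_2^{-1} X_1 X_2, \, X_2^{-1}X_1^{-1}X_2X_1X_2, \, \ldots)$ from the earlier example (restricted to $B_2$: $\si_1^2(X_1,X_2) = (X_2^{-1}X_1X_2,\, X_2^{-1}X_1^{-1}X_2X_1X_2)$), the condition $X \in \Fix(\ep\si_1^2)$ becomes $X_1 = \om^d X_2^{-1} X_1 X_2$ and $X_2 = \om^{N-d} X_2^{-1} X_1^{-1} X_2 X_1 X_2$. The first equation says $[X_1, X_2] = X_1^{-1}(X_2^{-1}X_1X_2) = \om^{-d} I$ (after rearranging $X_2^{-1}X_1X_2 = \om^{-d}X_1$), i.e. up to sign conventions $[X_2, X_1] = \om^{\pm d} I$; one checks the second equation is then automatically satisfied using $X_1 X_2 = X_2 X_1 \cdot \om^{\mp d}$ and the relation $X_2 = X_2$. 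By the Remark following Theorem \ref{thm-pair}, since $d$ is relatively prime to $N$, the solution $(X_1, X_2)$ to $[X_1,X_2] = \om^{\pm d}I$ is irreducible and unique up to conjugation. Hence $\wh\De_2 \cap \wh\Ga_{\ep\si_1^2}$ consists of a single point, and $h_{N,a}(H) = \pm 1$ provided the intersection is transverse there.

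The main obstacle is the transversality and sign computation: I need to verify that at this unique fixed point the graph $\wh\Ga_{\ep\si_1^2}$ and the diagonal $\wh\De_2$ meet transversally inside $\wh H_2$, so that the count is genuinely $\pm 1$ rather than requiring a perturbation that might split the point into several. I would do this by computing the derivative $d(\ep\si_1^2)$ at the fixed point on the tangent space $T_X Q_2 \cong \ft_1^\perp X_1 \oplus \ft_2^\perp X_2$ and showing that $\ker(d(\ep\si_1^2) - \mathrm{id})$, intersected with the tangent space to $H_2$ and then passed to the $PU(N)$-quotient, is zero; equivalently that $1$ is not an eigenvalue of the induced map on $T_{[X]}\wh H_2$. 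This is a finite linear-algebra computation with the explicit matrices from Theorem \ref{thm-pair} (with $X_1^d$ in place of $X$), parallel to the $SU(2)$ transversality check in \cite{HS1}. Since the intersection is a single transverse point, its sign is $\pm 1$ and the precise sign depends on orientation conventions, which accounts for the $\pm$ in the statement. Finally I would note that by Proposition \ref{markov1}, Proposition \ref{markov2}, and Proposition \ref{indepofep} this count is independent of all choices, so $h_{N,a}(H) = \pm 1$.
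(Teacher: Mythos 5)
Your proposal follows essentially the same route as the paper: realize $H$ as the closure of $\si_1^2 \in B_2$, choose $\ep = (\om^d, \om^{N-d})$, translate $\Fix(\ep\si_1^2)$ into the single commutator equation $[X_1,X_2]=\om^{\pm d}I$, and invoke Theorem \ref{thm-pair} together with the remark following it to get a unique irreducible solution up to conjugacy. The one place where you diverge is the non-degeneracy step, and there your argument is left as a sketch: you correctly identify that one must show the unique intersection point of $\wh\De_2$ and $\wh\Ga_{\ep\si_1^2}$ is transverse, and you propose to verify this by an explicit eigenvalue computation for $d(\ep\si_1^2)$ on $\ft_1^\perp X_1 \oplus \ft_2^\perp X_2$ using the matrices of Theorem \ref{thm-pair}. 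That computation would in principle succeed --- the kernel of $d(\ep\si_1^2)-\mathrm{id}$ on the relevant quotient is a twisted cohomology group --- but you do not carry it out, and for general $N$ it is a nontrivial piece of linear algebra. The paper closes this step instead with Lemma \ref{lemma-nondeg}, a short cohomological argument: since $G_H \cong \ZZ\times\ZZ$ and the link exterior is aspherical, irreducibility of $\varrho$ gives $H^0(G_H; su(N)_{\Ad\varrho})=0$, Poincar\'e duality gives $H^2(G_H; su(N)_{\Ad\varrho})=0$, and $\chi = 0$ then forces $H^1(G_H; su(N)_{\Ad\varrho})=0$, which is exactly the non-degeneracy you need. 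I recommend you adopt that argument (or actually execute your matrix computation) rather than leaving transversality as a stated intention; as written, this is the only genuine gap in your proof.
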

\begin{proof}
We motivate the proof with the following argument. The Hopf link $H$  has link group $G_H = \langle x,y \mid [x,y] = 1\rangle$, and Theorem \ref{thm-pair} implies there is a unique irreducible projective representation $\varrho \colon G_H \to SU(N)$ with
$[\varrho(x),\varrho(y)] = \om^d I.$ Uniqueness of $\varrho$ up to conjugacy implies that $h_{N,a}(H) = \pm 1$.

More precisely, notice that the Hopf link is the closure of the braid $ \si_1^2 \in B_2$ 
and fix the labels $a=(d, N-d)$ for $H$, where $1 \leq d < N$ is relatively prime to $N$. The braid 
$\si=\si_1^2$ acts on pairs $(X_1,X_2) \in \R_2 = SU(N) \times SU(N)$ in the usual way (see Example \ref{first-ex-page}), and for $\ep = (\ep_1,\ep_2)$ we have
$$\ep \si (X_1,X_2) =  (\ep_1 X_2^{-1} X_1 X_2, \ep_2 X_2^{-1} X_1^{-1} X_2 X_1 X_2).$$
For $(\ep_1,\ep_2)=(\om^d, \om^{N-d})$, one can easily see 
 that $(X_1,X_2) \in \Fix(\ep \si)$ if and only if 
$[X_1,X_2] = \om^d.$
By Theorem \ref{thm-pair} and the preceding remarks, this equation has one solution which is irreducible and unique up to conjugation. Lemma \ref{lemma-nondeg} below shows that the solution is non-degenerate, and this implies that $h_{N,a}(H) = \pm 1$ for the Hopf link.
\end{proof}

The next result establishes the non-degeneracy result required for the above computation of $h_{N,a}(H)$.

\begin{lemma} \label{lemma-nondeg}
Let $H$ be the Hopf link, $G_H$ its link group, and $1 \leq d < N$ relatively prime to $N$. Suppose $\varrho\colon G_H \to SU(N)$ is the projective representation, unique up to conjugation, of the link group $G_H$ with $a=(d,N-d)$. Then  $H^1(G_H;su(N)_{\Ad \varrho})=0.$
\end{lemma}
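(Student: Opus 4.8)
The plan is to show that the projective representation $\varrho$ is \emph{infinitesimally rigid} by computing the group cohomology $H^1(G_H; su(N)_{\Ad\varrho})$ directly. Since $G_H = \langle x, y \mid [x,y] = 1 \rangle \cong \ZZ \oplus \ZZ$, a cocycle $z \colon G_H \to su(N)$ is determined by its values $u = z(x)$ and $v = z(y)$, subject to the single relation coming from $[x,y] = 1$. Writing $X = \varrho(x)$, $Y = \varrho(y)$ as in Theorem \ref{thm-pair}, the cocycle condition on the commutator relator unwinds to a linear equation of the form $(\Ad_X - I)v = (\Ad_Y - I)u$ (using that $\Ad$ is insensitive to the central factor $\om^d I$, so $\Ad_{[X,Y]} = I$). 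Coboundaries are the pairs $(u,v) = ((\Ad_X - I)w, (\Ad_Y - I)w)$ for $w \in su(N)$. So the plan reduces to a concrete linear-algebra computation: show that the space of solutions $(u,v)$ to $(\Ad_X - I)v = (\Ad_Y - I)u$ modulo coboundaries is trivial.

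First I would set up the eigenspace decomposition of $\Ad_X$. Since $X$ is diagonal with distinct eigenvalues $\la_i = \la_1 \om^{i-1}$, the operator $\Ad_X$ acting on $su(N) \otimes \CC = sl(N,\CC)$ has eigenvalues $\la_i/\la_j = \om^{i-j}$ on the elementary matrix direction $E_{ij}$. Thus $\ker(\Ad_X - I)$ is exactly the diagonal (Cartan) subalgebra $\ft$, and $\Ad_X - I$ is invertible on $\ft^\perp$. Next, the companion-type matrix $Y$ acts on the eigenlines of $\Ad_X$ by cyclically permuting them: conjugation by $Y$ sends $E_{ij} \mapsto E_{i-1,j-1}$ (indices mod $N$, up to the sign in the corner, which cancels in the adjoint action). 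The key structural fact is that $\Ad_Y$ permutes the $N$ one-dimensional $\om^r$-eigenspaces of $\Ad_X$ for each fixed $r \neq 0$ in a single $N$-cycle, and permutes the $(N-1)$-dimensional diagonal subalgebra $\ft$ by an order-$N$ automorphism with no nonzero fixed vectors (this is where irreducibility, i.e. $\Stab(X) \cap \Stab(Y) = \ZZ_N$, enters: $\ft \cap \Ad_Y \ft$ at the group level forces $\Ad_Y$ to have no invariant vectors in $\ft$).

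With this decomposition in hand, the computation splits. On $\ft$: given $v \in \ft$ arbitrary (since $\Ad_X - I$ kills $\ft$, the equation forces $(\Ad_Y - I)u \in \ft$, but more usefully one solves for the $\ft$-component of $u$), one uses that $\Ad_Y - I$ is invertible on $\ft$ to kill the $\ft$-part by a coboundary $w \in \ft$. On each $\om^r$-eigenspace block ($r \neq 0$), $\Ad_X - I$ is invertible, so $v$ is determined by $u$ on that block; then one checks that the coboundary freedom $w \in \ft^\perp$ in the corresponding block exactly absorbs the remaining parameter, using that $\Ad_Y - I$ restricted to a direct sum of $N$ lines cyclically permuted is a Jordan-type operator whose cokernel matches up correctly. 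More conceptually, one can package the whole thing as: $H^1(\ZZ^2; M) = 0$ for a $\ZZ^2$-module $M$ on which the two generators act by operators $\Ad_X, \Ad_Y$ with $\Ad_X - I$ and the pair having no common invariants — this is the standard Künneth/coinvariants computation, and irreducibility ($H^0 = (su(N))^{\Ad\varrho} = 0$ since the pair is irreducible) together with Poincaré duality on the $2$-torus forces $H^1 = 0$ once $H^2 \cong (H^0)^* = 0$ is also checked. I would likely present the slick version: $G_H$ has a $K(\pi,1)$ equal to the $2$-torus $T^2$, so $\chi(T^2) = 0$ gives $\dim H^0 - \dim H^1 + \dim H^2 = 0$; irreducibility gives $H^0 = 0$; and Poincaré duality with the orthogonal (Killing-form) pairing on $su(N)$ gives $H^2 \cong H^0 = 0$; hence $H^1 = 0$.

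The main obstacle I anticipate is justifying $H^2(G_H; su(N)_{\Ad\varrho}) = 0$, equivalently $H^0 = 0$, equivalently that the pair $(X,Y)$ is irreducible as a representation into $PU(N)$ (not just that it has no invariant subspace in the reducibility sense of block form) — but this is precisely the content of Theorem \ref{thm-pair}: $\Stab(X) \cap \Stab(Y) = \ZZ_N$, so the centralizer of $\{\Ad_X, \Ad_Y\}$ in $su(N)$ is $\{0\}$, giving $H^0 = 0$. One must be slightly careful that the twisted coefficients are genuinely $su(N)$ with the adjoint $PU(N)$-action (the central $\om^d I$ acts trivially, so this is well-defined), and that Poincaré duality applies with the ad-invariant symmetric pairing on $su(N)$, which is nondegenerate. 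Modulo these checks, the cohomology vanishing is immediate from the Euler characteristic and duality argument; the direct linear-algebra route is available as a fallback but is messier.
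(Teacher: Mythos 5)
Your proposal is correct and, in the version you say you would actually present, coincides with the paper's proof: identify $G_H\cong\ZZ\oplus\ZZ$ with $\pi_1$ of an aspherical space of Euler characteristic zero, use irreducibility of the pair $(X,Y)$ (i.e.\ $\Stab(X)\cap\Stab(Y)=\ZZ_N$) to get $H^0=0$, and Poincar\'e duality to get $H^2=0$, whence $H^1=0$. The explicit cocycle/eigenspace computation you sketch as a fallback is not needed and does not appear in the paper.
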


\begin{proof}
Let $Z=S^3 \sm \tau H$ be the link exterior, and 
recall that the exterior of every non-split link in $S^3$ is a $K(\pi,1).$  
Thus $H^i(Z; su(N)_{\Ad \varrho})=H^i(G_H; su(N)_{\Ad \varrho})$,
where $G_H= \pi_1(Z)$ is the link group.

For the Hopf link, the link group $G_H = \ZZ \times \ZZ$ is the free abelian group of rank two. Since $\varrho$ is irreducible, it follows that
$H^0(G_H; su(N)_{\Ad \varrho})=0$, and Poincar\'e duality implies that
$H^2(G_H; su(N)_{\Ad \varrho})=0$. Using $\chi(Z)=0$, this shows that $H^1(G_H; su(N)_{\Ad \varrho})=0,$ which completes the proof of the lemma.
\end{proof}

Next, we consider a chain link $L$ and we establish the following vanishing result for $h_{N,a}(L)$.

\begin{proposition} \label{prop-chain}
Suppose $L$ is a chain link with $n>2$ components and that $n=N$. Then $h_{N,a}(L)=0$ for $a=(d,\ldots, d)$, where $d$ is relatively prime to $N$. 
\end{proposition}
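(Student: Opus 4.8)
The plan is to realize the chain link $L$ with $n=N$ components as the closure of the braid $\si = \si_1^2 \si_2^2 \cdots \si_{n-1}^2 \in B_n$, and to show directly that $\Fix(\ep\si)$ is empty for a suitable compatible $k$-tuple $\ep$, which immediately forces $h_{N,a}(L) = 0$. First I would compute the action of $\si$ on $(X_1,\ldots, X_n) \in \R_n = SU(N)^n$. Since $\si_i^2$ acts only on the $i$-th and $(i+1)$-th coordinates, conjugating the $i$-th entry, one obtains that the fixed-point equations $X_i = \ep_i\si(X)_i$ translate, after choosing $\ep$ compatible with $a = (d,\ldots,d)$ (so $\ep_i$-products along each cycle equal $\om^d$), into a system of commutator relations among consecutive generators: roughly $[X_i, X_{i+1}] = \om^{d}I$ (or its inverse/power, depending on crossing signs and the placement of the $\ep_i$) for each consecutive pair, while non-consecutive generators commute. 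In link-group terms this is just the standard Wirtinger-type presentation of the chain link group, with the projective twist supplied by the labels.

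Next I would invoke Theorem \ref{thm-pair} and the remark following it: for each consecutive pair, $[X_i,X_{i+1}] = \om^{\pm d}I$ forces, up to simultaneous conjugation, $X_i$ to be the diagonal matrix $\diag(\xi, \xi\om, \ldots, \xi\om^{N-1})$ (or $\diag(1,\om,\ldots,\om^{N-1})$ when $N$ is odd) and $X_{i+1}$ to be a cyclic permutation matrix of the displayed form — in particular $X_{i+1}$ is \emph{non-diagonal}, indeed has all diagonal entries zero. But the same pairing applied to the pair $(X_{i+1}, X_{i+2})$ forces $X_{i+1}$ to be conjugate to the \emph{diagonal} matrix $A$. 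The heart of the argument is to derive a contradiction from the requirement that a single matrix $X_{i+1}$ simultaneously have the form dictated by being the ``$Y$'' in one commutator relation and the ``$X$'' in the next, while all three consecutive matrices must satisfy the full system. Concretely, after normalizing $X_1 = A$ diagonal, $X_2$ must be a permutation matrix $Y$; then $X_2$ being diagonalizable with the same spectrum as $A$ but non-diagonal is fine in isolation, but the relation $[X_2,X_3] = \om^{\pm d}I$ together with $X_2$ already fixed means $X_3$ is determined up to the centralizer of $X_2$; and then the relation $[X_3, X_4]$, together with $X_2$ and $X_4$ having to commute (non-consecutive), over-determines the system. I expect the cleanest route is: normalize so $X_1$ is diagonal, deduce $X_2$ is a permutation matrix, then observe $X_3$ must commute with $X_1$ (non-consecutive strands in the chain: components $\ell_1$ and $\ell_3$ are unlinked), so $X_3$ is diagonal; but then $[X_2,X_3] = \om^{\pm d}I$ with $X_2$ a permutation matrix and $X_3$ diagonal forces $X_3$ to be a scalar (conjugating a diagonal matrix by a full cycle and asking the result to be $\om^{\pm d}$ times the original forces, as in the proof of Theorem \ref{thm-pair} run the other way, all entries equal), hence $X_3 \in \ZZ_N$, contradicting $X_3 \in C_A$ (which has $N$ distinct eigenvalues, so is certainly not central) — unless $N \le 2$, but we assumed $n = N$ and $n > 2$.

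The main obstacle, and the step requiring the most care, is bookkeeping the signs and the exact placement of the central twists $\ep_i$: the braid word $\si_1^2\cdots\si_{n-1}^2$ produces commutators $[X_i, X_{i+1}]$ with exponents that depend on the orientation conventions (Convention 1.13 of \cite{KT}) and on which strand each $\ep_i$ decorates, and one must check that compatibility of $\ep$ with $a = (d,\ldots,d)$ really does yield exactly $\om^{\pm d}I$ on the nose for every consecutive pair and $I$ for every non-consecutive pair — in particular that the product condition \eqref{EpProduct} is respected, which it is since $a_1 + \cdots + a_n = nd = Nd \equiv 0$. Once that is pinned down, I would also need to confirm that the relation for the ``last'' pair $(X_{n-1}, X_n)$ and the closure identifications are consistent with (and subsumed by) the contradiction already obtained, so that no fixed point exists at all; since the contradiction arises already from the sub-configuration $(X_1, X_2, X_3)$, the remaining relations only cut down $\Fix(\ep\si)$ further and cannot rescue it. Therefore $\Fix(\ep\si) = \varnothing$, so $\wh\De_n \cap \wh\Ga_{\ep\si} = \varnothing$, and $h_{N,a}(L) = h_{N,a}(\ep\si) = 0$.
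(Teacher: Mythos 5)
There is a genuine gap, and in fact the proposed conclusion $\Fix(\ep\si)=\varnothing$ is false. Your contradiction hinges on the claim that non-consecutive meridians must commute (``$X_3$ must commute with $X_1$ since $\ell_1$ and $\ell_3$ are unlinked''), but the chain link group has presentation $G_L = \langle x_1,\ldots,x_n \mid [x_1,x_2]=\cdots=[x_{n-1},x_n]=1\rangle$ with \emph{only} the consecutive commutator relations; being unlinked does not make meridians commute (they do not even commute for a split link, where the group is free). The Wirtinger/braid computation for $\si_1^2\cdots\si_{n-1}^2$ imposes no condition relating $X_1$ and $X_3$, so you cannot conclude $X_3$ is diagonal. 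A second error compounds this: even for $X_3$ diagonal, the relation $[X_2,X_3]=\om^{\pm d}I$ with $X_2$ the cyclic permutation matrix forces the diagonal entries of $X_3$ to form a geometric progression with ratio $\om^{\pm d}$ (exactly the computation in Theorem \ref{thm-pair}, read the other way), not to be all equal; so $X_3$ lands in $C_A$, not in the center. Concretely, $X_3=X_1^{-1}$ satisfies $[X_2,X_3]=\om I$ by the commutator identity $[y,x^{-1}]=x^{-1}[x,y]x$, so fixed points exist.

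The actual mechanism of vanishing is different: because there is no constraint tying $X_3,\ldots,X_n$ back to $X_1$, the solution set is a \emph{positive-dimensional} nondegenerate critical submanifold. Starting from the rigid pair $(X_1,X_2)$ of Theorem \ref{thm-pair} and the explicit solution $X_{i+2}=X_i^{-1}$, the stabilizers $\Stab(X_i)\cong T^{N-1}$ for $i=3,\ldots,n$ act on the tail of the tuple preserving the relations, and the solution set is parameterized by $\Stab(X_3)/\ZZ_N\times\cdots\times\Stab(X_n)/\ZZ_N$, a product of tori. Its contribution to $h_{N,a}(L)$ is $\pm$ its Euler characteristic, which is zero. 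So the invariant vanishes not because there are no projective representations, but because they come in tori of positive dimension.
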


\begin{proof}
We start with the chain link $L$ with $n=3$ components. It has link group with presentation
$$G_L = \langle x,y,z \mid [x,y] = 1 = [y,z] \rangle.$$ We will parameterize all triples $(X,Y,Z) \in SU(3)\times SU(3)\times SU(3)$ satisfying $[X,Y] = \om I = [Y,Z]$, and we will use this to show that 
$h_{3,a}(L) = 0$ for $a=(1,1,1).$  

Applying Theorem \ref{thm-pair}, up to conjugacy, there is a unique irreducible pair $(X,Y) \in SU(3)\times SU(3)$ satisfying the equation $[X,Y]=\om I.$ This pair is given by
$$   X = \begin{pmatrix}
1&0& 0 \\
0& \om &0 \\
 0&  0 &\om^2 
 \end{pmatrix}
 \quad \text{ and }
Y = \begin{pmatrix}
0& 0 &1 \\
1& 0 & 0 \\
0& 1 &0 \end{pmatrix}.$$
In a general group, the commutator satisfies the relations
$$[x,y]^{-1} = [y,x] = y \, [x, y^{-1}]\, y^{-1} = x\, [x^{-1}, y] \, x^{-1}.$$
Setting $Z= X^{-1}$, this shows that $[Y,Z] = \om I$, and thus the triple $(X,Y,Z)$ gives rise to a projective representation $\varrho\colon G_L \to SU(3)$ satisfying $$[\varrho(x),\varrho(y)] = \om I = [\varrho(y),\varrho(z)].$$
If $P \in \Stab(Y)$, then 
$[Y,PZP^{-1}] = P [Y,Z] P^{-1} = \om I,$ hence the action of $\Stab(Y)$ on triples given by
$(X,Y,Z) \mapsto (X,Y,PZP^{-1})$ 
preserves the relations and is nontrivial on conjugacy classes. It follows that the solution set is 2-dimensional and parameterized by $\Stab(Y)/\ZZ_3,$ which has Euler characteristic zero since $\Stab(Y) \cong T^2$ is a copy of a maximal torus. A calculation similar to the one in the proof of Lemma \ref{lemma-nondeg} shows that the solution set is a nondegenerate critical submanifold, and a standard argument then shows that its contribution to the invariant is given by $\pm$ its Euler characteristic (cf. the proof of Proposition 8 in \cite{BH}). It follows that 
$h_{3,a}(L) =0$ for $a=(1,1,1),$ and a similar argument shows that $h_{3,a}(L) = 0$ for $a=(2,2,2).$
        
One can also prove this via a direct approach making use of the fact that  $L$ is the closure of the braid $\si = \si_1^2 \si_2^2$ and parameterizing the fixed point set $\Fix(\ep \si)$ as was done for the Hopf link. We leave the details to the reader.

Next, consider the chain link $L$ with 4 components. It has link group with presentation
$$G_L = \langle x,y,z, w \mid [x,y] = 1 = [y,z] = [z,w] \rangle.$$  
By Theorem \ref{thm-pair}, up to conjugacy, there is a unique irreducible pair $(X,Y) \in SU(4)\times SU(4)$ satisfying the equation $[X,Y]=\om I.$ This pair is given by
$$   X = \begin{pmatrix}
\xi&0&0& 0 \\
0& \xi^3&0&0   \\
 0& 0 &\xi^5&0 \\
 0&0&0& \xi^7
 \end{pmatrix}
 \quad \text{ and }
Y = \begin{pmatrix}
0& 0 & 0& -1 \\
1& 0 & 0 &0\\
0& 1 &0 &0\\
0&0&1&0 \end{pmatrix}.$$
Taking $Z=X^{-1}$ and $W=Y^{-1},$ one can show that the 4-tuple
$(X,Y,Z,W)$ gives rise to a projective representation $\varrho\colon G_L \to SU(3)$ with $[\varrho(x),\varrho(y)] = \om I = [\varrho(y),\varrho(z)] = [\varrho(z),\varrho(w)]$. The two groups $\Stab(Y)$ and $\Stab(Z)$ act on 4-tuples
by $(X,Y,Z,W) \mapsto (X,Y,PZP^{-1}, PWP^{-1})$ for $P \in \Stab(Y)$ and 
$(X,Y,Z,W) \mapsto (X,Y,Z, QWQ^{-1})$ for $Q \in \Stab(Z)$, and these actions preserve the relations and are nontrivial on conjugacy classes. It follows that the solution set is 6-dimensional and parameterized by $\Stab(Y)/\ZZ_4 \times \Stab(Z)/\ZZ_4,$ which has Euler characteristic zero.
By similar considerations as in the previous case, it follows that 
$h_{4,a}(L) =0$ for $a=(1,1,1,1),$ and
a similar argument shows that $h_{4,a}(L) =0$ for $a=(3,3,3,3).$ 

As before, one can perform these computations directly by noting that $L$ is the closure of the braid $\si = \si_1^2 \si_2^2 \si_3^2$ and parameterizing the fixed point set $\Fix(\ep \si)$.

 This argument generalizes to the $n$-component chain link in a straightforward manner, as we now explain.
The chain link $L$ with $n$ components has link group with presentation
$$G_L = \langle x_1,\ldots ,x_n \mid [x_1,x_2] = \cdots = [x_{n-1},x_n] = 1 \rangle.$$  
By Theorem \ref{thm-pair}, up to conjugacy, there is a unique irreducible pair $(X_1,X_2) \in SU(N)\times SU(N)$ satisfying the equation $[X_1,X_2]=\om I.$ A solution is obtained by taking $X_1=X$
and $X_2=Y$ for $X,Y$ as in the statement of the theorem, and setting $X_{i+2}=X_{i}^{-1}$ for
$i=1,\ldots, n-2$, the $n$-tuple $$(X_1,\ldots, X_n) \in SU(N) \times \cdots \times SU(N)$$ 
 is easily seen to satisfy the relations
$$ [X_1,X_2] = \cdots = [X_{n-1},X_n] = \om I.$$
For $i= 3, \ldots,  n$, the group $\Stab(X_i)$ acts on $n$-tuples by 
$$(X_1,\ldots, X_n) \mapsto (X_1,\ldots, X_i, PX_{i+1}P^{-1}, \ldots, PX_nP^{-1})$$ for $P \in \Stab(X_i).$
These actions preserve the relations and are nontrivial on conjugacy classes. Since each $\Stab(X_i) \cong T^{N-1}$ is a maximal torus, it follows that the solution set has dimension $(N-1)(N-2)$ and is  parameterized by $\Stab(X_3)/\ZZ_N \times \cdots \times \Stab(X_n)/\ZZ_N,$ which has Euler characteristic zero.
It follows that 
$h_{N,a}(L) =0$ for $a=(1,\ldots, 1),$ and
a similar argument shows that $h_{N,a}(L) =0$ for $a=(d,\ldots, d)$ for any $d$ relatively prime to $N$.
 \end{proof}
  
\subsection{Split Links}
In this section, we consider links $L  \subset S^3$ that are geometrically split and prove a vanishing result
for $h_{N, a}(L)$ provided that the labels satisfy the following condition.
Assume $L$ is a link with $n$ components, and suppose it is split. Denoting the components of $L$ by $ \ell_1 \cup \cdots \cup \ell_n$, this means that  $L = L_1 \cup L_2$, where up to reordering $L_1 =   \ell_1 \cup \cdots \cup \ell_{n_1}$ and $L_2 =  \ell_{n_1+1} \cup \cdots \cup \ell_n$ are sublinks that are separated by a 2-sphere.
We shall assume that the labels $(a_1,\ldots, a_n)$ satisfy the additional condition:
\begin{equation}\label{NoMult}
a_1+ a_2+\cdots + a_{n_1} \text{ is not a multiple of $N$.}
\end{equation} 

Using Markov moves we can always find a {\it split braid} representative $\be \in B_k$ of $L$, see Figure \ref{SplitBraid}. This means that
$\be= \be_1 \be_2$ where $\be_1 \in {\rm Im}(B_{k_1} \stackrel{i_1}{\hookrightarrow} B_k)$ and 
$\be_2 \in {\rm Im}(B_{k_2} \stackrel{i_2}{\hookrightarrow} B_k)$ for $k=k_1+k_2$ and 
$i_1, i_2$ are injective maps obtained by stabilizing on the right and left, respectively. More precisely, $i_1$ takes a braid in $B_{k_1}$ and adds $k_2$  trivial strands on the right, and $i_2$ takes a braid in $B_{k_2}$ and adds $k_1$ trivial strands on the left.   
Any link $L$ obtained as the closure $\wh\be$ of a split braid is obviously a split link, and any split link $L$ can be obtained as the closure of a split braid.

\begin{figure}[h]
\includegraphics[scale=1]{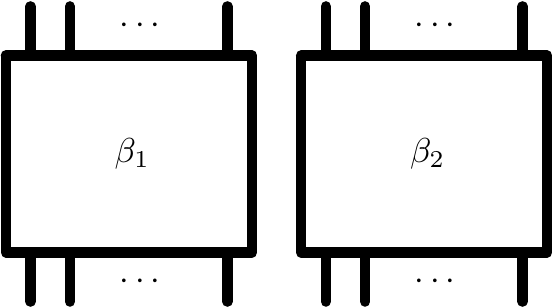} 
\caption{A split braid} \label{SplitBraid}
\end{figure}

\begin{proposition}\label{SplitLinks}
Suppose $L$ is a split link and that $\be$ is a split braid with $\wh\be = L.$ Suppose further that $a=(a_1,\ldots, a_n)$ is an $n$-tuple of  labels satisfying Equation \eqref{NoMult}, and $\ep=(\ep_1,\ldots, \ep_k)$ is a compatible $k$-tuple.
Then the intersection $ \De_k \cap  \Ga_{\ep\be}=\varnothing$, and consequently  $h_{N,a}(L) = 0.$
\end{proposition}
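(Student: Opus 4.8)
The plan is to show directly that $\De_k \cap \Ga_{\ep\be}$ is empty by analyzing what a fixed point of $\ep\be$ would have to look like when $\be$ is a split braid. The key structural fact is that if $\be = \be_1\be_2$ with $\be_1$ supported on the first $k_1$ strands and $\be_2$ on the last $k_2$ strands, then $\ep\be$ acts on $X = (X_1,\ldots,X_k) \in \R_k$ by leaving the two blocks of coordinates separate: the images $(\ep\be)(X)_1,\ldots,(\ep\be)(X)_{k_1}$ depend only on $X_1,\ldots,X_{k_1}$ (and $\ep_1,\ldots,\ep_{k_1}$), and similarly for the second block. Moreover, since the two sub-braids act independently, $X \in \Fix(\ep\be)$ forces $(X_1,\ldots,X_{k_1}) \in \Fix(\ep'\be_1)$ for the truncated tuple $\ep' = (\ep_1,\ldots,\ep_{k_1})$, and likewise for the second block. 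So a point of $\De_k \cap \Ga_{\ep\be} = \Fix(\ep\be)$ splits as a pair of fixed points of the two smaller twisted braids.

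The next step is to derive a contradiction from condition \eqref{NoMult}. Consider the first block: the sub-link $L_1 = \ell_1 \cup \cdots \cup \ell_{n_1}$ is the closure $\wh{\be_1}$, and the permutation $\bar\be_1$ decomposes into cycles $I_1,\ldots,I_{n_1}$ corresponding to the components of $L_1$. Applying equation \eqref{EpProduct} — the same computation that shows $\ep\si$ preserves the product $X_1\cdots X_k$ — to the sub-braid $\be_1$, any $(X_1,\ldots,X_{k_1}) \in \Fix(\ep'\be_1)$ satisfies
\[
X_1 \cdots X_{k_1} = \left(\prod_{i \in I_1}\ep_i\right)\cdots\left(\prod_{i \in I_{n_1}}\ep_i\right) X_1 \cdots X_{k_1} = \om^{a_1 + \cdots + a_{n_1}}\, X_1 \cdots X_{k_1},
\]
using the compatibility condition \eqref{Ep} for each cycle. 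Since $X_1\cdots X_{k_1} \in SU(N)$ is invertible, this forces $\om^{a_1+\cdots+a_{n_1}} = 1$, i.e.\ $N$ divides $a_1 + \cdots + a_{n_1}$, contradicting \eqref{NoMult}. Hence $\Fix(\ep\be) = \varnothing$, so $\De_k \cap \Ga_{\ep\be} = \varnothing$ and $h_{N,a}(L) = 0$.

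The main point requiring care — and the one I'd expect to be the principal obstacle to make fully rigorous — is the claim that the twisted split braid $\ep\be$ really does decompose as an independent product action on the two coordinate blocks, in particular that a global fixed point restricts to fixed points of each piece. This needs the observation that $\be_1$ and $\be_2$ act on disjoint sets of free generators of $F_k$ and that the permutations and conjugating words $w$ appearing in \eqref{BraidConj} for $\be_1$ involve only $x_1,\ldots,x_{k_1}$; combined with the block structure of $\ep$ (which splits because the cycles of $\bar\be$ are entirely within one block or the other, each component of $L$ lying in $L_1$ or $L_2$), this gives the desired decomposition. Once that is established the rest is the short determinant/product computation above. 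It is worth remarking that the argument shows the stronger statement that $\Fix(\ep\be)$ itself is empty, not merely that the oriented count vanishes, and that by Proposition \ref{indepofep} and the Markov invariance established in Section \ref{sec3}, the conclusion $h_{N,a}(L) = 0$ is independent of the choice of split braid $\be$ and compatible $\ep$.
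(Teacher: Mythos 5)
Your argument is correct and is essentially the paper's own proof: both reduce to the observation that a split braid preserves the partial product $X_1\cdots X_{k_1}$ (via \eqref{Product} applied to $\be_1$, since $\be_2$ fixes the first $k_1$ generators), so a fixed point of $\ep\be$ would force $X_1\cdots X_{k_1}=\om^{a_1+\cdots+a_{n_1}}X_1\cdots X_{k_1}$, contradicting \eqref{NoMult}. The extra scaffolding about restricting to fixed points of each sub-braid is fine but not needed; the partial-product identity alone suffices, which is how the paper phrases it.
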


\begin{proof}

Let $X \in  \De_k \cap  \Ga_{\ep\be}$, then by \eqref{Ep}
$$X_1 \cdots X_{k_1} =  \om^{dn_1}\be(X)_1 \cdots \be(X)_{k_1}.$$
Since $\be= \be_1 \be_2$ is a split braid with $\be_1 \in B_{k_1}$, by \eqref{Product} we have that 
$$ \be(X)_1 \cdots \be(X)_{k_1}= \be_1(X)_1 \cdots \be_1(X)_{k_1}= X_1 \cdots X_{k_1},$$
and this implies
$$X_1 \cdots X_{k_1} = \om^{a_1+\cdots +a_{n_1}}X_1 \cdots X_{k_1}.$$
But $\om^{a_1+\cdots +a_{n_1}} \neq 1$ by the assumption of Equation \eqref{NoMult}, and this gives the desired contradiction.
\end{proof}

\subsection{Concluding remarks}
One can give an alternative interpretation of the invariants $h_{N,a}(L)$ in terms of a signed count of conjugacy classes of representations $\varrho \colon G_L \to PU(N)$ of the link group as follows. We begin by recalling the classification of principal $PU(N)$ bundles from \cite{Woodward}. 
The classifying space $BPU(N)$ is simply connected and has  $\pi_2(BPU(N)) = \ZZ_N,$ and an application of the main theorem of \cite{Woodward} implies that principal $PU(N)$ bundles $P \to X$ over a 3-complex $X$ are determined by the characteristic class $w(P) \in H^2(X;\ZZ_N)$. In case $N=2$, $PU(2) = SO(3)$ and $w(P)$ coincides with the second Stiefel-Whitney class. 

Let $L \subset S^3$ be a link and $M_L = S^3 \sm \tau(L)$ its exterior. A projective $SU(N)$ representation induces a representation $\varrho\colon G_L \to PU(N)$, and we denote the associated cohomology class by $w(\varrho) \in H^2(G_L;\ZZ_N)$. The class $w(\varrho)$ vanishes if and only if $\varrho$ lifts to an $SU(N)$ representation. Further, there is a canonical injection $H^2(G_L;\ZZ_N) \to H^2(M_L;\ZZ_N)$ which is an isomorphism if and only if $M_L$ is aspherical, i.e.~ if and only if $L$ is non-split.

By reduction mod $N$, an allowable $n$-tuple $(a_1,\ldots, a_n) \in \ZZ^n$ determines a unique cohomology class $\bar{w}(a_1,\ldots, a_n) \in H^2(\del M_L;\ZZ_N) \cong (\ZZ_N)^n.$ 
The exact sequence in cohomology for the pair $(M_L,\del M_L)$ gives 
$$\to H^2(M_L;\ZZ_N) \stackrel{i^*}{\lto} H^2(\del M_L;\ZZ_N)\to H^3(M_L,\del M_L;\ZZ_N) \to 0;$$
and condition (iii) from \S \ref{subsect2-2}  guarantees that $\bar{w}(a_1,\ldots, a_n)$ lies in the image of $i^*$ and hence determines a class $w(a_1,\ldots, a_n) \in H^2(M_L;\ZZ_N)$. Condition (ii) implies that the class $w(a_1,\ldots, a_n)$ has order $N$.  

From this point of view, the invariant $h_{N,a}(L)$ is closely related to the signed count of conjugacy classes of representations $\varrho \colon G_L \to PU(N)$ such that $w(\varrho) = w(a_1,\ldots, a_n).$ Proposition \ref{SplitLinks} is therefore a direct consequence of the fact that for split links $L$ and allowable $n$-tuples $(a_1,\ldots, a_n)$ satisfying condition \eqref{NoMult}, the associated cohomology class $w(a_1,\ldots, a_n)$, which is not unique, does not lie in the image of $H^2(G_L;\ZZ_N) \to H^2(M_L;\ZZ_N)$.

As mentioned in the introduction, it would be interesting to investigate the relationship between the $SU(N)$ Casson-Lin invariants studied here and the $SU(N)$ instanton Floer groups constructed by Kronheimer and Mrowka in \cite{KM1, KM2}. It would also be interesting to understand the relationship between the $SU(N)$ Casson-Lin invariants and classical link invariants. For example, the main result of \cite{HS1} equates the $SU(2)$ Casson-Lin invariant $h_{2}(L)$ of a two component link $L=\ell_1 \cup \ell_2$ with the linking number $\lk(\ell_1,\ell_2).$ The following conjecture, if true, would give a generalization to the higher rank setting.

\begin{conjecture} If $L=\ell_1 \cup \ell_2$ is a two component link in $S^3$, then the $SU(N)$ Casson-Lin invariant satisfies $$h_{N,a}(L) =\lk(\ell_1,\ell_2)^{N-1}.$$
\end{conjecture}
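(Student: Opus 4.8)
The plan is to reduce the general case to a single family of links and then compute directly. For a $2$-component link $L=\ell_1\cup\ell_2$ the only allowable labels are $a=(d,N-d)$ with $\gcd(d,N)=1$, and then Proposition \ref{A} makes the monodromy condition automatic, so one may work with $\Fix(\ep\si)$. The reduction I have in mind uses two classical facts: by Milnor's classification of link homotopy of $2$-component links, any $L$ with $\lk(\ell_1,\ell_2)=m$ is link-homotopic to the $(2,2m)$-torus link $T_{2,2m}=\wh{\si_1^{2m}}$, and a link homotopy is a finite sequence of isotopies and self-crossing changes of the individual components. So the heart of the matter — and the step I expect to be the genuine obstacle — is to prove that $h_{N,a}$ is unchanged by a self-crossing change of one component of a $2$-component link. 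One would attack this by a cobordism-of-moduli argument in the spirit of Section \ref{sec3}: a self-crossing change is realized by surgery on a small unknot encircling the crossing, and one hopes that the fixed-point sets before and after are joined by a compact cobordism inside the irreducible locus, yielding equal signed counts. The delicate point is that for knots a crossing change does change the Casson--Lin count ($h(K)=\sign(K)/2$), so one must genuinely use that here the crossing is a self-crossing of a single component in order to keep the cobordism off the reducible wall; this is exactly why the statement remains conjectural. (For $N=2$ it holds, since $h_{2}(L)=\pm\lk(\ell_1,\ell_2)$ by \cite{HS1}.)

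Granting the reduction, the second step computes $h_{N,a}(T_{2,2m})$, generalizing Propositions \ref{prop-hopf} and \ref{prop-chain}. The braid $\si_1^{2m}\in B_2$ acts on $(X_1,X_2)\in\R_2$ by simultaneous conjugation by $D^{-m}$, where $D=X_1X_2$, so for compatible $\ep=(\om^d,\om^{N-d})$ one checks that $(X_1,X_2)\in\Fix(\ep\si_1^{2m})$ precisely when $[D^m,X_1]=\om^dI$ and $[D^m,X_2]=\om^{-d}I$. The first relation gives $X_1^{-1}D^mX_1=\om^dD^m$, so the spectrum of $D^m$ is $\om^d$-invariant, hence $\ZZ_N$-invariant since $\gcd(d,N)=1$, and with $\det D=1$ this forces $D^m\in C_A$. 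Conjugating so that $D^m=A$, the matrix $D$ commutes with $A$ and is therefore diagonal, while $[A,X_1]=\om^dI$ and $[A,X_2]=\om^{-d}I$ pin $X_1$ and $X_2$ down up to the residual torus $\Stab(A)\cong T^{N-1}$ by Theorem \ref{thm-pair} and the Remark following it. A slice argument — fix $X_1$ to be the standard generalized-permutation matrix, after which the residual stabilizer is the center and acts trivially — identifies the conjugacy classes of solutions with the diagonal matrices $D\in SU(N)$ satisfying $D^m=A$. Writing $D=\diag(\delta_1,\dots,\delta_N)$, this is the number of tuples with $\delta_i^m=A_{ii}$ and $\prod_i\delta_i=1$, namely $m^{N-1}$.

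It remains to settle non-degeneracy and signs. Non-degeneracy should follow as in Lemma \ref{lemma-nondeg}: for $m\ge1$ the complement of $T_{2,2m}$ is aspherical, every solution is irreducible, and the Poincar\'e duality plus Euler characteristic argument gives $H^1(G_L;su(N)_{\Ad\varrho})=0$, so each of the $m^{N-1}$ solutions is a transverse point contributing $\pm1$. To reach $h_{N,a}(T_{2,2m})=m^{N-1}$ one must trace the base--fiber orientations through the identification above and see that the $m^{N-1}$ local contributions all have the same sign, using the Hopf link case $m=1$ (a single point, equal to $\pm1$ by Proposition \ref{prop-hopf}) to fix the global sign and the behavior under orientation reversal of $S^3$ to produce the factor $(-1)^{N-1}$ when $\lk<0$. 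As a consistency check, the case $\lk=0$ is free: such an $L$ is link-homotopic to the split unlink, where Proposition \ref{SplitLinks} gives $h_{N,a}(L)=0=0^{N-1}$. A possibly more conceptual alternative would be to prove a Lin-type formula identifying $h_{N,a}(L)$ with an evaluation of a twisted Reidemeister torsion or of the multivariable Alexander polynomial — for $2$-component links this is governed by the linking number — but I do not see that this is any easier than the route above.
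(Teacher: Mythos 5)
This statement is left as an open conjecture in the paper: the authors offer no proof, only the remark that it is consistent with all known computations and that they suspect (but do not prove) that $h_{N,a}$ is a link homotopy invariant. Your proposal therefore cannot be checked against a proof in the paper, and it is not itself a complete proof. The decisive gap is the one you flag yourself: invariance of $h_{N,a}$ under a self-crossing change of a single component. Without that, Milnor's classification of two-component links up to link homotopy by the linking number buys you nothing, and the reduction to $T_{2,2m}$ collapses. The ``cobordism of moduli off the reducible wall'' you gesture at is not an argument --- the machinery of Section \ref{sec3} only controls compactly supported isotopies of the graph inside a \emph{fixed} $\wh H_k$ for a fixed braid; a crossing change alters the braid word and hence the diffeomorphism $\ep\si$ itself, and nothing in the paper produces a parametrized family interpolating between the two fixed-point problems while avoiding reducibles. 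As you note, the analogous statement is false for crossing changes between distinct components (and for knots), so the needed mechanism is genuinely specific and genuinely missing. This is precisely why the statement is a conjecture.

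The computational half is a sensible and largely correct sketch, and would be worth writing up as \emph{evidence}: for $\si_1^{2m}$ the fixed-point equations do reduce to $[D^m,X_1]=\om^d I$ with $D=X_1X_2$, Proposition \ref{A} and Theorem \ref{thm-pair} do force $D^m\in C_A$ and pin down $(D^m,X_1)$ up to conjugacy, and the residual count of diagonal $D$ with $D^m=A$, $\det D=1$ is indeed $m^{N-1}$. But two further steps are asserted rather than proved. First, nondegeneracy: the argument of Lemma \ref{lemma-nondeg} uses that $G_H=\ZZ\times\ZZ$ is a $2$-dimensional Poincar\'e duality group to kill $H^2$; for $m>1$ the group of $T_{2,2m}$ is not of this type, so $H^1(G_L;su(N)_{\Ad\varrho})=0$ requires a separate computation (or a direct transversality check on $\Fix(\ep\si_1^{2m})$). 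Second, the claim that all $m^{N-1}$ transverse points carry the same sign is exactly the kind of statement the paper never establishes beyond the single-point Hopf link case, and anchoring the overall sign to $m=1$ does not propagate to $m>1$ without an orientation computation. So: interesting strategy, correct identification of the main obstacle, but the statement remains unproved both in the paper and in your proposal.
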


This conjecture is consistent with all known computations of the $SU(N)$ Casson-Lin invariants, and it's possible that the invariants $h_{N,a}(L)$ are generally invariant under link homotopy. We hope to explore these topics in future work.

\bigskip
\noindent
{\it Acknowledgements.} Both authors would like to thank Chris Herald, Andy Nicas, and Nikolai Saveliev for their valuable input. We would also like to thank an anonymous referee for many helpful suggestions.   The first author expresses his gratitude to the Max Planck Institute for Mathematics in Bonn for its support.

\end{document}